\newcommand{\iso}{\overset{\sim}{\longrightarrow}}
\newcommand{\C}{\mathbf{C}}
\renewcommand{\P}{\mathbf{P}}
\newcommand{\Q}{\mathbf{Q}}
\newcommand{\Z}{\mathbf{Z}}
\newcommand{\un}{\mathbf{1}}
\newcommand{\sM}{\mathcal{M}}
\newcommand{\sJ}{\mathcal{J}}
\newtheorem{thm}{Theorem}
\newtheorem{lm}{Lemma}
\newtheorem{cor}{Corollary}
\theoremstyle{definition}
\newtheorem{defn}{Definition}
\newtheorem{prop}{Proposition}
\newtheorem{rk}{Remark}
\begin{document}

\title{On  the finite dimensionality of a K3 surface}
\author{Claudio Pedrini}  
 \maketitle
  Universit\'a di Genova,  Dipartimento di Matematica, Via
Dodecaneso 35,  16146 Genova( Italy),  \email{pedrini@dima.unige.it } \bigskip

{\it Abstract}:  For a smooth projective surface $X$ the finite dimensionality   of the Chow motive $h(X)$,  as conjectured by S.I Kimura, has several geometric consequences. For a complex surface of general type with $p_g=0$ it is equivalent to Bloch's conjecture. The conjecture is still open for a K3 surface $X$ which is not a Kummer surface. In this paper we prove some results on Kimura's conjecture for complex K3 surfaces. If  $X$ has a large Picard number $ \rho = \rho(X)$,i.e $\rho =19,20$,  then the motive of $X$ is finite dimensional.  If  $X$  has  a non-symplectic group acting trivially on algebraic cycles  then  the motive of $X$ is finite dimensional.  If $X$ has a symplectic involution $i$, i.e a Nikulin involution,  then the finite dimensionality of $h(X)$ implies $h(X)\simeq h(Y)$, where $Y$ is a desingularization of the quotient surface $X/<i>$.We   give several examples of K3 surfaces with a Nikulin involution such that the isomorphism $h(X) \simeq h(Y) $ holds, so giving some evidence to Kimura's  conjecture in this case. \bigskip

 \section {Introduction}
 For a smooth projective variety  $X$  over a field $k$  we will denote by $A^i(X)$ the Chow group of codimension $i$ cycles with rational coefficients and by   $\sM_{rat}(k)$ the (covariant ) category of Chow motives  with rational coefficients over the field $k$, which is  is a  $\Q$-linear, pseudoabelian, tensor category.\par  
\noindent An  object $M \in \sM_{rat}(k)$ is of the form  $M =(X,p,m)$,  where $X$ is a smooth projective variety over $k$, $p$ a correspondence in $X \times X$ such that $p^2=p$   and $m \in \Z$. We will denote by $h(X)$ the motive $(X,\Delta_X,0)$,where $\Delta_X$ is the diagonal in $X \times X$. If $X$ and $Y$ are smooth (irreducible) projective varieties over $k$ then
$$Hom_{\sM_{rat}(k)}(h(X),h(Y)=A^{dim X}(X\times Y)$$
\noindent where $A^*(X \times Y) =CH^*(X \times Y) \otimes \Q$.\par   
\noindent We will consider a classical Weil cohomology theory $H^*$ with coefficients in a field $K$ of characteristic 0  which induces a tensor functor $H^* :\sM_{rat}  \to Vect^{gr}_K $ such that $H^i((X,p,m) = p^*H^{i -2m}(X,K)$ (see [KMP 1.4].  If $char \ k =0$ homological equivalence does not depend on the choice of $H^*$. By replacing rational equivalence with homological equivalence we get the category $\sM_{hom}(k)$ of homological motives.\par
\noindent For an  object  $M \in \sM_{rat}(k)$, one   defines   the exterior power $\wedge^n M \in \sM_{rat}(k)$  ( and similarly in $\sM_{hom}$) and the symmetric power $S^nM$ . (see [Ki]).  A motive $M$ is {\it finite dimensional } if it can be decomposed as $M =M^{+} \oplus M^{-}$ with $M^{+}$   evenly finite dimensional , i.e such that $\wedge^n M =0$ for some $n >0$ and $M^{-}$  oddly finite dimensional,  i.e such that  $S^nM=0$ for $n >0$.\par
\noindent    S.I.Kimura and O'Sullivan  ( se [Ki]) have conjectured that all the motives in $\sM_{rat}(k)$ are finite dimensional. The conjecture is  known for curves, for  abelian varieties  and for some surfaces:  rational surfaces, Godeaux surfaces, Kummer surfaces, surfaces with $p_g=0$ which are not of general type, surfaces isomorphic to a quotient $(C \times D)/G$, where $C$ and $D$ are curves and $G$ is a finite group. It is also known for Fano 3-folds (see[G-G]). In all these  known cases the motive $h(X)$ lies in the tensor subcategory of $\sM_{rat}(k)$ generated by abelian varieties.\par 
\noindent  If $M=h(X)$ is the motive of a surface then  the finite dimensionality of $M$ is equivalent to the vanishing of $\wedge^n t_2(X)$ for some $n >0$, where $t_2(X)$ is the {\it transcendental part}  of $h(X)$. This follows  from the existence of a refined Chow-K\"unneth decomposition for the motive  $h(X)$ of a surface  

$$h(X) = \un \oplus h_1(X) \oplus h^2_{alg}(X) \oplus  t_2(X) \oplus h_3(X) \oplus \mathbf L^2 $$

\noindent where  $ \un$ is the motive of a point  and $\mathbf L$ is the Lefschetz motive. (see [KMP]). In the above decomposition all the summands,   but possibly $t_2(X)$,   are finite dimensional  because they lie in the subcategory of $\sM_{rat}(k)$ generated by abelian varieties.Therefore the information necessary to study the above conjecture  
for a surface $X$ is concentrated in the transcendental part of the motive $t_2(X)$. More precisely,  according to  Murre's Conjecture (see [Mu]), or equivalently to Bloch-Beilinson's conjecture ( see [J])  and to
Kimura's Conjecture the following results should hold for a surface $X$  

(a) The motive $t_2(X)$ is evenly finite dimensional;\par 

(b) $h(X)$ satisfies the Nilpotency conjecture , i.e every homologically trivial endomorphism of $h(X)$ is nilpotent ;\par  

(c) Every homologically trival correspondence in $CH^2(X \times X)_{\Q}$ acts trivially on the Albanese kernel $T(X)$; \par

d) The endomorphism group of $t_2(X)$ (tensored with $\Q$) has finite rank (over a field of
characteristic 0).\par 
\noindent By a result of S.Kimura in [Ki] , (a) implies (b).\par 
\noindent If $X$ is a complex surface of general type with $p_g(X)=0$ ,  Bloch's conjecture asserts that $A_0(X) \simeq \Q$. Then
$$ (a) \Longleftrightarrow A_0(X) =\Q \Longleftrightarrow t_2(X) =0$$
\noindent (see [G-P]).\par
\noindent  A case where all the above conjectures are still unknown is that of a complex K3 surface which is not a Kummer surface.\par
\noindent  The aim of this paper  is to prove some results about the finite dimensionality of $h(X)$ in the case $X$ is a K3 surface over $\C$.\par
\noindent  Note that  a result  by Y.Andre' in [A 10.2.4.1]   implies  that the motive of a K3 surface is isomorphic to the motive of an abelian variety in a suitable  category of {\it motivated motives}.  Under the standard  conjecture $B(X)$ this category coincides with $\sM_{hom}$ (see [A  p.100].  Therefore Andre's result suggests that the Chow motive of every K3 surface can be expressed in terms of the motives of abelian varieties.\par
In \S 2 we consider the case of a projective surface $X$ with an involution $\sigma$ and the desingularization  $Y$ of the quotient surface $X/<\sigma>$. Corollary 1 gives necessary and sufficient conditions on $\sigma$ for  the existence of an isomorphism    $t_2(X) \simeq t_2(Y)$ and for $t_2(Y)=0$. In particular  this result applies to  a   complex surface  of general type $X$ with $p_g(X)=0$ and an involution $\sigma$  for which $t_2(Y)=0$.\par
In \S 3  we   apply the results  in \S 2 to the case of a complex K3 surface $X$ with an involution $\sigma$. If $\sigma$ is symplectic , i.e 
$\sigma$  is a {\it Nikulin involution},  then   the finite  dimensionality of $h(X)$ implies the isomorphism $h(X)  \simeq h( Y)$, see Theorem 3.  If   the rank of the Neron- Severi group of $X$ is 19 or 20,  then $h(X)$ is finite dimensional (Theorem 2)  . If $\sigma$ is not symplectic then $t_2(Y) =0$, with $Y =X/<\sigma>$, hence $t_2(X) \ne t_2(Y)$, see Remark 3. If a K3 surface $X$  has  a non-symplectic group acting trivially on algebraic cycles  then   the motive of $X$ is finite dimensional ( Corollary 2).  Note that,  in all the cases where we can show that the motive $h(X) $ of a K3 surface is finite dimensional, $h(X)$ lies  in the tensor subcategory of $\sM_{rat}(k)$ generated by abelian varieties.\par
\noindent  In \S 4 , using the results  in [VG-S],  we describe   several examples of K3 surfaces , with a Nikulin involution $i$ and Picard rank 9, such that $t_2(X) \simeq t_2(Y)$.   We also show (see Theorem 7) that  the same result holds if the  K3 surface $X$  has an elliptic fibration $ X \to \P^1$ with a section. This gives some evidence to Kimura's conjecture for a K3 surface with a symplectic involution.
\par \medskip

We thank A.Del Padrone, V.Guletskii, B.Kahn and C.Weibel for many helpful comments on a earlier draft of this paper.We also thank the Referee for suggesting several improvements   in the exposition and simplifications in the proofs.

\section { Surfaces with an involution}

In this section  we prove some results on the transcendental part $t_2(X)$ of the motive of  a surface $X$,with an involution $\sigma$.\par
\noindent We first note that, if $X$ is a smooth projective variety over a field $k$,  and  $G$ is a finite group  acting on $X$, then  the theory of correspondences can be extended to $Y =X/G$,  if one uses rational coefficients in  the Chow groups  ( see [Fu 16.1.13]).   In particular this holds  if  $G =<\sigma>$, where $\sigma$ is an involution.\par
\noindent Let $X$ be  a smooth irreducible projective surface (over any field  $k$) with a refined Chow-K\"unneth decomposition  $\sum_{0 \le i \le 4}h_i(X)$
where  $h_2(X) =h^{alg}_2(X) +t_2(X)$ and $t_2(X)= (X,\pi^{tr}_2,0)$, see [KMP 2.2]. Here  
$$ \pi^{alg}_2(X)= \sum_{1 \le h \le \rho} \frac {[D_h \times D_h]}{D^2_h} $$
\noindent where $\{D_h\}$ is an  orthogonal basis of $NS(X) \otimes \Q$ and $\rho = rank \  NS(X)$.
The map 
\begin {equation}
 \Psi_X : A^2(X \times X)  \to  End_{\sM_{rat}}(t_2(X))
\end{equation} 
defined by  $\Psi_X(\Gamma) =\pi^{tr}_2 \circ \Gamma \circ \pi^{tr}_2$  yields  an isomorphism (see [KMP 4.3])
$$ A^2(X \times X)/\sJ(X) \simeq End_{\sM_{rat}}(t_2(X))$$
\noindent where   $\mathcal J( X)$ is the ideal of $A^2(X \times X)$ generated by the classes of correspondences which are not dominant over $  X$ by either the first or the second projection.   Let $k(X)$ be the field of rational functions and let  $T(X_{k(X)}))$ be  the Albanese kernel of $X_{k(X)}$, i.e the kernel of the Abel-Jacobi map $A_0(X_{k(X)}) \to Alb_X(k(X)\otimes \Q$. Let 
$\tau_X : A^2(X \times X) \to T(X_{k(X)}) $ be the map  
$$\tau_X(Z) = (\pi^{tr}_2 \circ Z \circ \pi^{tr}_2)(\xi)$$
\noindent with $\xi$ the generic point of $X$.  Then $\tau_X$  induces an  isomorphism  (see [KMP 5.10])
$$ End_{\sM_{rat}}(t_2(X))\simeq \frac{ T(X_{k(X)})} {H_{\le1}\cap T(X_{k(X)})}$$
\noindent Here $H_{\le1}$ is the subroup of $A_0(X_{k(X)})$ generated by the subgroups $A_0(X_L)$, where $L$ runs over the subfields of $k(X)$ containing $k$ and which are of transcendence degree $\le1$ over $k$. If $q(X)=0$ then $X$ has  no odd cohomology, $Alb_X(k)=0$  and  in the Chow -K\"unneth decomposition we have $h_1(X) =h_3(X) =0$. Therefore  $A_0(X_{k(X)})_0=  T(X_{k(X)})$ and  $T(X)= A_0(X)_0$, where $A_0(X)_0$ is the group of 0-cycles of degree 0. 
By [KMP 5.10 ] we have 
$$ H_{\le1} \cap  T(X_{k(X)}) = T(X)$$
\noindent Hence , for a  surface $X$  with $q(X)=0$,  the map $\tau_X$ yields an isomorphism
\begin{equation}
 End_{\sM_{rat}}(t_2(X)) \simeq \frac { A_0(X_{k(X)})} {A_0(X))}
\end{equation}
\noindent where the class $[\xi]$ in $ \frac { A_0(X_{k(X)})} {A_0(X))}$ of the generic point $[\xi]$ of $X$ corresponds to the identity of the ring $End_{\sM_{rat}}(t_2(X))$ \par
\noindent The definition of the map $\Psi_X$ in  (1) can be extended to the case of  two smooth projective surfaces $X$ and $X'$ as in [KMP 7.4]
$$\Psi_{X,X'} : A^2(X \times X') \to Hom_{\sM_{rat}}(t_2(X),t_2(X'))$$
\noindent  and the following functorial relation holds
\begin {equation} 
\Psi_{X,X"} ( \Gamma' \circ \Gamma) = \Psi_{X',X"} (\Gamma') \circ \Psi_{X,X'}(\Gamma)
\end{equation} where $X,X',X"$ are smooth projective surfaces, $\Gamma \in A^2(X\times X')$ and $\Gamma' \in A^2(X' \times X")$. The proof of (3) immediately follows
 by taking refined Chow-K\"unneth decompositions of the motives $h(X)$ , $ h(X')$ , $h(X")$ and writing the elements in $Hom_{\sM_{rat}}(h(X), h(X'))$, and $Hom_{\sM_{rat}}(h(X'), h(X"))$ as lower triangular  matrices defined by these decompositions, as   in [KMP p.163]. Applying $\Psi$ corresponds to taking appropriate diagonal entries of such lower triagular matrices.\par

\begin {lm} Let  $X$ and $Y$ are smooth projective surfaces and let $f: X \to Y$ be a finite morphism.Then  $f$ induces homomorphisms
$\bar f_* : End_{\sM_{rat}}(t_2(X)) \to End_{\sM_{rat}}(t_2(Y))$ and $\bar f^*: End_{\sM_{rat}}(t_2(Y) \to End_{\sM_{rat}}(t_2(X)$.
\end {lm}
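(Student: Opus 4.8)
The plan is to build the two maps $\bar f_*$ and $\bar f^*$ out of the pushforward and pullback correspondences attached to a finite morphism $f\colon X\to Y$, and then to use the isomorphism (1) identifying $\operatorname{End}_{\sM_{rat}}(t_2(X))$ with $A^2(X\times X)/\sJ(X)$ together with the functoriality relation (3) to check that the constructions are well-defined. Concretely, let $\Gamma_f\subset X\times Y$ be the graph of $f$ and ${}^t\Gamma_f\subset Y\times X$ its transpose; since $f$ is a finite morphism of surfaces, both are classes in $A^2$ of the relevant products. First I would set, for $\Gamma\in A^2(X\times X)$,
$$\bar f_*(\Psi_X(\Gamma)) = \Psi_Y\bigl(\Gamma_f\circ\Gamma\circ{}^t\Gamma_f\bigr),\qquad \bar f^*(\Psi_Y(\Delta)) = \Psi_X\bigl({}^t\Gamma_f\circ\Delta\circ\Gamma_f\bigr),$$
so that on the level of motives these are just the maps induced by $\Psi_{X,Y}(\Gamma_f)\colon t_2(X)\to t_2(Y)$ and $\Psi_{Y,X}({}^t\Gamma_f)\colon t_2(Y)\to t_2(X)$ by pre- and post-composition. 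Applying (3) twice shows $\bar f_*(\alpha\circ\beta)=\bar f_*(\alpha)\circ\bar f_*(\beta)$ and likewise for $\bar f^*$, so each is a ring homomorphism once well-definedness is settled.

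The heart of the argument — and the step I expect to be the main obstacle — is well-definedness: I must show that if $\Gamma\in\sJ(X)$, i.e.\ $\Gamma$ is a sum of correspondences not dominant over $X$ by one of the two projections, then $\Gamma_f\circ\Gamma\circ{}^t\Gamma_f\in\sJ(Y)$, and symmetrically for $\bar f^*$. Equivalently, by the isomorphism (1), one checks that $\Psi_{X,Y}(\Gamma_f)$ and $\Psi_{Y,X}({}^t\Gamma_f)$ send the ideal to zero — but the cleanest route is to observe that $\Psi_X(\Gamma)=0$ already means $\pi^{tr}_2\circ\Gamma\circ\pi^{tr}_2=0$ as a correspondence (after composing with the idempotents cutting out $t_2$), and then $\pi^{tr}_{2,Y}\circ(\Gamma_f\circ\Gamma\circ{}^t\Gamma_f)\circ\pi^{tr}_{2,Y}$ factors through $\pi^{tr}_2\circ\Gamma\circ\pi^{tr}_2=0$ up to the compatibility of the transcendental projectors with $f$. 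For this last point I would use that, for a finite surjective morphism of surfaces with $q=0$ (which is the relevant geometric situation in \S2–\S3; in general one argues directly with the projectors), $f$ induces morphisms $t_2(X)\to t_2(Y)$ and $t_2(Y)\to t_2(X)$ because $\Gamma_f$ respects the Chow–Künneth decomposition up to lower-triangular terms, exactly as recalled after (3); the diagonal entry in degree $2$ of $\Psi_{X,Y}(\Gamma_f)$ is then precisely the induced map on $t_2$.

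An alternative, perhaps more economical, formulation avoids mentioning $\sJ$ altogether: simply \emph{define} $\bar f_*$ and $\bar f^*$ as ``compose with $\Psi_{X,Y}(\Gamma_f)$ and $\Psi_{Y,X}({}^t\Gamma_f)$'', which are honest morphisms $t_2(X)\rightleftarrows t_2(Y)$ in $\sM_{rat}$; then pre-composition and post-composition of a fixed morphism of objects of an additive category always yields a homomorphism of the endomorphism rings (of source, respectively target), and (3) is exactly the statement that $\Psi$ is compatible with this. In this telling there is essentially nothing to prove beyond citing [KMP 7.4] and relation (3); the only content is the remark that a finite morphism $f\colon X\to Y$ of smooth projective surfaces does give rise to the correspondences $\Gamma_f\in A^2(X\times Y)$ and ${}^t\Gamma_f\in A^2(Y\times X)$ and hence, via $\Psi_{X,Y}$ and $\Psi_{Y,X}$, to morphisms between the transcendental motives. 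I would present the proof in this second form, with a sentence pointing out that under the identifications (1)–(2) these maps are the evident ones induced by $f_*$ and $f^*$ on $0$-cycles modulo $A_0(X)$, respectively $A_0(Y)$, which both clarifies the statement and makes the use of it in the sequel transparent.
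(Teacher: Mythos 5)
Your construction is the same as the paper's: the paper defines $(f\times f)_*(Z)=\Gamma_f\circ Z\circ\Gamma_f^t$ and $(f\times f)^*(Z')=\Gamma_f^t\circ Z'\circ\Gamma_f$ on $A^2(X\times X)$ and $A^2(Y\times Y)$, and obtains $\bar f_*$, $\bar f^*$ by checking that these operations send the ideal $\sJ(X)$ into $\sJ(Y)$ and $\sJ(Y)$ into $\sJ(X)$, so that they descend through the isomorphism (1). Your preferred second formulation replaces that geometric check by defining the maps directly as $\alpha\mapsto\Psi_{X,Y}(\Gamma_f)\circ\alpha\circ\Psi_{Y,X}(\Gamma_f^t)$ (and symmetrically), with well-definedness then automatic and compatibility with the correspondence-level description supplied by the functoriality relation (3) and [KMP 7.4]; this is a legitimate and arguably cleaner packaging of the same argument, and your first formulation's well-definedness argument also goes through once one applies (3) twice as you indicate.

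One genuine error to remove: the claim that $\bar f_*$ and $\bar f^*$ are \emph{ring} homomorphisms is false. Since $\bar f_*(\alpha)=\Psi_{X,Y}(\Gamma_f)\circ\alpha\circ\Psi_{Y,X}(\Gamma_f^t)$, multiplicativity would require $\Psi_{Y,X}(\Gamma_f^t)\circ\Psi_{X,Y}(\Gamma_f)=\Psi_X(\Gamma_f^t\circ\Gamma_f)$ to be the identity of $End_{\sM_{rat}}(t_2(X))$, which it is not: for a double cover $\Gamma_f^t\circ\Gamma_f=\Delta_X+\Gamma_\sigma$, and indeed Proposition 1 of the paper gives $\bar f_*([\xi])=2[\eta]$, so $\bar f_*$ does not even send the identity to the identity, and $\bar f_*([\xi]\circ[\xi])=2[\eta]\neq 4[\eta]=\bar f_*([\xi])\circ\bar f_*([\xi])$. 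The lemma only asserts homomorphisms of additive ($\Q$-linear) groups, so your proof of the stated lemma survives, but the sentence asserting $\bar f_*(\alpha\circ\beta)=\bar f_*(\alpha)\circ\bar f_*(\beta)$ should be deleted or weakened to additivity.
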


\begin {proof} The maps $\Psi_X : A^2(X\times X) \to End_{\sM_{rat}}(t_2(X))$ and $\Psi_Y : A^2(Y\times Y) \to End_{\sM_{rat}}(t_2(Y))$
give rise to the following commutative  diagram  

$$ \CD  0 \to \sJ(X)@>>> A^2(X \times X)@>{\Psi_X}>> End_{\sM_{rat}}(t_2(X))@>>> 0 \\
  @VVV     @V {(f \times f)_*}VV   @V{\bar f_*}VV   @. \\
  0\to \sJ(Y)@>>> A^2(Y \times Y)@>{\Psi_Y}>>End_{\sM_{rat}}(t_2(Y))@>>> 0 \\
  @VVV       @V{(f\times f)^*}VV     @V{ \bar f^*}VV  @.  \\
  0 \to \sJ (X)@>>> A^2(X \times X)@>{\Psi_X}>>End_{\sM_{rat}}(t_2(X))@>>> 0 
 \endCD $$
\noindent where the map $(f \times f)_* $ sends a correspondence $Z \in A^2(X \times X)$   to $\Gamma_f \circ Z \circ \Gamma^t_f$ and the map $f\times f)^*$ sends  a correspondence 
$Z' \in A^2(Y \times Y) $ to $\Gamma^t_f \circ Z' \circ \Gamma_f$. It is easy to see that these maps send the ideal  $\sJ(X) $ to $\sJ(Y)$ and $\sJ(Y)$ to $\sJ(X)$ respectively, thus yielding the diagram above.
\end{proof}

\begin {prop} Let $X$ be a smooth projective surface  with an involution $\sigma$,  such that  the quotient surface   $Y= X/<\sigma> $ is smooth.  Let $\xi$ denote the generic point of $X$,$\eta$ the generic point of $Y$  and let $[\xi] =\Psi_X(\Delta_X) \in End_{\sM_{rat}}(t_2(X))$, $[\eta] = \Psi_Y(\Delta_Y) \in End_{\sM_{rat}}(t_2(Y))$. Set $\alpha = \Psi_X(1\times \sigma)\Delta_X = \Psi_X(\Gamma_{\sigma})=\bar\sigma([\xi])$. Then the map $f : X \to Y$ satisfies:\par   
 (i)$ 1/2(\Gamma_f \circ \Gamma^t_f) = \Delta_Y$,  $ \bar f_*([\xi]) = \bar f_*(\alpha)= 2 [\eta] $ and  $(\alpha)^2 =[\xi] $.\par
 (ii) $\bar f^*([\eta]) = [\xi] +\alpha$ and $\bar f^*(\bar f_*([\xi]) = 2[\xi] +2\alpha$.\par
 (iii) Let $p = 1/2(\Gamma^t_f \circ \Gamma_f)$; then  $p\circ p =p$,  $\Psi_X(p) =1/2([\xi] +\alpha)$ and\par 
 $\Psi_X(\Delta_X - p) = 1/2([\xi] -\alpha)$.Hence  $[\xi] =1/2([\xi] +\alpha) +1/2([\xi] -\alpha).$ 
 
 \end{prop}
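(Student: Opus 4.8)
The plan is to reduce every assertion to a handful of identities among the graph correspondences $\Gamma_f\in A^2(X\times Y)$, $\Gamma_f^t\in A^2(Y\times X)$ and $\Gamma_\sigma\in A^2(X\times X)$, and then to push these identities through the maps $\Psi_X,\Psi_Y$ and $\bar f_*,\bar f^*$ supplied by Lemma 1, together with the multiplicativity relation (3). The geometric input is the following: since $f\colon X\to Y$ is the quotient by $\sigma$ and $Y$ is smooth, $f$ is finite flat of degree $2$ and étale away from the (smooth) ramification divisor, so the fibre product $X\times_Y X$ has fundamental cycle $[\Delta_X]+[\Gamma_\sigma]$ and hence
$$\Gamma_f^t\circ\Gamma_f=\Delta_X+\Gamma_\sigma\qquad\text{in }A^2(X\times X),$$
while $f_*f^*=\deg(f)\cdot\mathrm{id}$ gives
$$\Gamma_f\circ\Gamma_f^t=2\,\Delta_Y\qquad\text{in }A^2(Y\times Y).$$
Since the composite of two graphs is the graph of the composite and $f\circ\sigma=f$, $\sigma\circ\sigma=\mathrm{id}$, one also records $\Gamma_f\circ\Gamma_\sigma=\Gamma_f$, $\Gamma_\sigma\circ\Gamma_f^t=\Gamma_f^t$ and $\Gamma_\sigma\circ\Gamma_\sigma=\Delta_X$.

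For (i): dividing the second displayed identity by $2$ gives $\frac{1}{2}(\Gamma_f\circ\Gamma_f^t)=\Delta_Y$. By Lemma 1, $\bar f_*\,\Psi_X(Z)=\Psi_Y(\Gamma_f\circ Z\circ\Gamma_f^t)$; taking $Z=\Delta_X$ yields $\bar f_*([\xi])=\Psi_Y(\Gamma_f\circ\Gamma_f^t)=\Psi_Y(2\Delta_Y)=2[\eta]$, and taking $Z=\Gamma_\sigma$, using $\Gamma_f\circ\Gamma_\sigma=\Gamma_f$, yields $\bar f_*(\alpha)=\Psi_Y(\Gamma_f\circ\Gamma_f^t)=2[\eta]$. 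Finally, by (3) applied with $X=X'=X''$ together with $\Gamma_\sigma\circ\Gamma_\sigma=\Delta_X$, we get $\alpha^2=\Psi_X(\Gamma_\sigma)\circ\Psi_X(\Gamma_\sigma)=\Psi_X(\Gamma_\sigma\circ\Gamma_\sigma)=\Psi_X(\Delta_X)=[\xi]$.

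For (ii) and (iii): by Lemma 1, $\bar f^*\,\Psi_Y(Z')=\Psi_X(\Gamma_f^t\circ Z'\circ\Gamma_f)$; with $Z'=\Delta_Y$ and the linearity of $\Psi_X$ this gives $\bar f^*([\eta])=\Psi_X(\Gamma_f^t\circ\Gamma_f)=\Psi_X(\Delta_X+\Gamma_\sigma)=[\xi]+\alpha$, and then $\bar f^*(\bar f_*([\xi]))=\bar f^*(2[\eta])=2([\xi]+\alpha)$, which is (ii). For (iii), $p=\frac{1}{2}(\Gamma_f^t\circ\Gamma_f)=\frac{1}{2}(\Delta_X+\Gamma_\sigma)$, and expanding $p\circ p=\frac{1}{4}(\Delta_X+\Gamma_\sigma)\circ(\Delta_X+\Gamma_\sigma)$ and using $\Gamma_\sigma\circ\Gamma_\sigma=\Delta_X$ gives $p\circ p=\frac{1}{4}(2\Delta_X+2\Gamma_\sigma)=p$, an idempotent already in $A^2(X\times X)$. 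Applying $\Psi_X$ gives $\Psi_X(p)=\frac{1}{2}([\xi]+\alpha)$ and $\Psi_X(\Delta_X-p)=[\xi]-\frac{1}{2}([\xi]+\alpha)=\frac{1}{2}([\xi]-\alpha)$; adding the two displays the identity $[\xi]=\frac{1}{2}([\xi]+\alpha)+\frac{1}{2}([\xi]-\alpha)$.

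The only step that is not pure formalism is the identification $\Gamma_f^t\circ\Gamma_f=\Delta_X+\Gamma_\sigma$ (with its companion $\Gamma_f\circ\Gamma_f^t=2\Delta_Y$): one must check that the ramification divisor of $f$ contributes neither an extra component nor an excess multiplicity to the cycle $[X\times_Y X]$. With $\Q$-coefficients this is routine — $\Delta_X$ and $\Gamma_\sigma$ already exhaust $X\times_Y X$ set-theoretically, and each occurs with multiplicity $1$, as one sees at its generic point where $f$ is étale (cf. [Fu]) — but it is the one place where the geometry of $f$, rather than the calculus of correspondences and Lemma 1, is used; everything else is formal manipulation in the $\Q$-linear category of correspondences.
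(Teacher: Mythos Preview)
Your proof is correct and follows essentially the same route as the paper: both arguments rest on the identities $\Gamma_f\circ\Gamma_f^t=2\Delta_Y$, $\Gamma_f\circ\Gamma_\sigma=\Gamma_f$, and $\Gamma_\sigma\circ\Gamma_\sigma=\Delta_X$, pushed through $\Psi_X,\Psi_Y$ via Lemma~1 and the functoriality relation~(3). The only cosmetic differences are that you make the identity $\Gamma_f^t\circ\Gamma_f=\Delta_X+\Gamma_\sigma$ explicit up front (the paper only writes it down later, in the proof of Corollary~1) and you verify $p\circ p=p$ by expanding $(\Delta_X+\Gamma_\sigma)^2$, whereas the paper associates the product as $\tfrac14\,\Gamma_f^t\circ(\Gamma_f\circ\Gamma_f^t)\circ\Gamma_f=\tfrac14\,\Gamma_f^t\circ 2\Delta_Y\circ\Gamma_f$; your added paragraph on why ramification does not disturb the cycle identity is a welcome clarification that the paper leaves implicit.
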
 
\begin {proof} Regard the diagonals  $\Delta_X$ and $\Delta_Y$  as cycles in $A^2(X \times X)$ and $A^2(Y \times Y)$). Then $ \bar f_*([\xi])$ is the image under $\Psi_Y$ of $\Gamma _f \circ \Gamma^t_f =2\Delta_Y$. Thus 
$ \bar f_*([\xi])=2\Psi_Y(\Delta_Y) =2[\eta]$ and we also have
$$ p \circ p =(1/4)\Gamma^t_f \circ  (2\Delta_Y)\circ \Gamma_f = 1/2(\Gamma^t_f \circ \Gamma_f) =p$$
\noindent Since $\alpha$ is the image of $(1\times \sigma)\Delta_X$ , and $(1\times \sigma) \Delta_X \cdot (1\times \sigma) \Delta_X) =\Delta_X$ we have $\alpha^2 =[\xi]$. Since $\Gamma_f \cdot(1\times \sigma)\Delta_X =\Gamma_f$, the correspondence  $(1\times \sigma)\Delta_X$ also maps to $\Delta_Y$, so $f_*(\alpha) =2 [\eta]$. This establish (i) and (ii) follows immediately. Part (iii) follows from (ii) and $p \circ p =p$.
\end{proof}
\medskip

Let $X$ be a smooth projective surface  and let $\sigma $ be an involution on $X$. Let  $k$ be the number of isolated fixed points of $\sigma$ and let $D$ the 1-dimensional part of the fixed-point locus. The divisor $D$ is smooth (possibly empty). Let $\tilde X$ be the blow-up of of the set of isolated fixed points . Then the involution $\sigma $ lifts to an involution on $\tilde X$  (which we will still denote by $\sigma$). The quotient $Y = \tilde X/<\sigma>$  is a desingularization of $X/<\sigma>$. $Y$
has $k$ disjoint nodal curves $C_1,\cdots ,C_k$. The map $ X \to X/<\sigma>$ induces a commutative diagram
  $$ \CD \tilde X @>{\beta}>>X \\
@V{f}VV   @VVV \\
 Y@>>> X/<\sigma> \endCD  $$
Since $t_2(-)$ is a birational invariant for smooth projective surfaces the maps $\beta : \tilde X \to  X$ and $f : \tilde X \to Y$ induce a morphism
$$ \theta : t_2(\tilde X)=t_2(X) \to t_2(Y)$$

\begin {cor} Let $X,\tilde X, Y$ be as in the diagram above. Then \par
 (i)  $\theta : t_2(X) \to t_2(Y)$ is the projection onto a direct summand.\par
 (ii) $\theta$ is an isomorphism  iff $\Psi_X(\Gamma_{\sigma})= id_{t_2(X)},  $ i.e iff   $\bar \sigma([\xi])=[\xi]$ in $End_{\sM_{rat}}(t_2(X))$.\par
 (iii) If $q(X)=0$ the conditions of (ii) are equivalent to $A_0(X)^{\sigma}_0 =A_0(X)_0$.\par 
 (iv)  $ t_2(Y) =0  \Longleftrightarrow \Psi_X(\Gamma_{\sigma})= -id_{t_2(X)}  \Longleftrightarrow \bar \sigma([\xi])=- [\xi]$ in $ End_{\sM_{rat}}(t_2(X)$.  If $q(X)=0 $ this is equivalent to   $A_0(X)^{\sigma}_0 =0 $.
\end{cor}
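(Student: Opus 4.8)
The plan is to read everything off from Proposition 1 applied to the degree‑two finite morphism $f\colon\tilde X\to Y$ (recall $Y=\tilde X/\langle\sigma\rangle$ is smooth by the construction of $\tilde X$), together with the functoriality (3) of $\Psi$, using throughout the identification $t_2(\tilde X)=t_2(X)$ furnished by the blow‑up $\beta$: under it $\Psi_{\tilde X}(\Delta_{\tilde X})=[\xi]=id_{t_2(X)}$ and, since $\beta$ commutes with the involutions, $\Psi_{\tilde X}(\Gamma_\sigma)=\alpha=\Psi_X(\Gamma_\sigma)=\bar\sigma([\xi])$. Set $\theta=\Psi_{\tilde X,Y}(\Gamma_f)\colon t_2(X)=t_2(\tilde X)\to t_2(Y)$ — this is the morphism appearing just before the statement — and $\theta'=\Psi_{Y,\tilde X}(\Gamma^t_f)\colon t_2(Y)\to t_2(\tilde X)=t_2(X)$. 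By (3) and Proposition 1(i),(iii), writing $p=\tfrac12(\Gamma^t_f\circ\Gamma_f)$,
$$\theta\circ\theta'=\Psi_Y(\Gamma_f\circ\Gamma^t_f)=\Psi_Y(2\Delta_Y)=2\,[\eta]=2\,id_{t_2(Y)},\qquad \theta'\circ\theta=\Psi_{\tilde X}(\Gamma^t_f\circ\Gamma_f)=2\,\Psi_{\tilde X}(p)=[\xi]+\alpha.$$
Hence $\tfrac12\theta'$ is a section of $\theta$, and $e:=\tfrac12\,\theta'\circ\theta=\tfrac12([\xi]+\alpha)$ is an idempotent of $\operatorname{End}(t_2(X))$ (consistently, $\alpha^2=(2e-[\xi])^2=[\xi]$, matching Proposition 1(i)).

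Next I would extract the four statements. Since $\sM_{rat}$ is pseudoabelian, $t_2(X)=\operatorname{im}(e)\oplus\operatorname{im}([\xi]-e)$; from $\theta\circ\theta'=2\,id_{t_2(Y)}$ one gets $\theta\circ e=\theta$ and $\tfrac12\theta'=e\circ\tfrac12\theta'$, so $\theta$ kills $\operatorname{im}([\xi]-e)$ and restricts to an isomorphism $\operatorname{im}(e)\iso t_2(Y)$ with inverse $\tfrac12\theta'$. Thus $\theta$ is the projection onto the direct summand $\operatorname{im}(e)\simeq t_2(Y)$, which is the ``$\sigma$‑invariant part'' $\{x:\alpha x=x\}$ of $t_2(X)$; this is (i). Consequently $\theta$ is an isomorphism iff $\operatorname{im}([\xi]-e)=0$ iff $[\xi]-e=0$ iff $\alpha=[\xi]$, i.e. iff $\Psi_X(\Gamma_\sigma)=id_{t_2(X)}$, equivalently $\bar\sigma([\xi])=[\xi]$ — this is (ii). Dually, $t_2(Y)\simeq\operatorname{im}(e)=0$ iff $e=0$ iff $\alpha=-[\xi]$, i.e. $\Psi_X(\Gamma_\sigma)=-id_{t_2(X)}$, equivalently $\bar\sigma([\xi])=-[\xi]$ — the first equivalence of (iv). (Equivalently, decomposing $t_2(X)=t_2(X)^+\oplus t_2(X)^-$ by the eigenvalues $\pm1$ of $\alpha$, one has $t_2(X)^+=\operatorname{im}(e)\simeq t_2(Y)$, and (ii), (iv) say $t_2(X)^-=0$, $t_2(X)^+=0$ respectively.)

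For the $A_0$‑reformulations in (iii) and the last assertion of (iv), assume $q(X)=0$ (and, as elsewhere, take the ground field to be a universal domain, e.g. $\C$). Then $A_0(X)_0=T(X)$ is exactly the summand of $A_0(X)$ cut out by $t_2(X)$, and since the $t_2$‑component of $\Gamma_\sigma$ is $\alpha$, the endomorphism $\alpha$ acts on $A_0(X)_0$ as $\sigma_*$. Hence $\alpha=\pm[\xi]$ forces $\sigma_*=\pm id$ on $A_0(X)_0$; and for an involution, $\sigma_*=id$ on $A_0(X)_0$ is the same as $A_0(X)^\sigma_0=A_0(X)_0$, while $\sigma_*=-id$ on $A_0(X)_0$ is the same as $A_0(X)^\sigma_0=0$ (the nontrivial implication being that $z+\sigma_*z$ is always $\sigma$‑invariant, hence $0$, so $\sigma_*z=-z$). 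For the converse implications I would use (2): under $\operatorname{End}(t_2(X))\simeq A_0(X_{k(X)})/A_0(X)$ the element $[\xi]$ is the class of the generic point, so $\alpha=\pm[\xi]$ amounts to the vanishing in $A_0(X_{k(X)})/A_0(X)$ of $\tau_X(\Gamma_\sigma\mp\Delta_X)$; by the hypothesis on $\sigma_*$, the self‑correspondence $\pi^{tr}_2\circ(\Gamma_\sigma\mp\Delta_X)\circ\pi^{tr}_2$ annihilates $A_0(X)$, so it is fibrewise rationally trivial over $X$ along the first projection, and a spreading‑out/specialization argument (as in the proof of (2) in [KMP]) gives the required vanishing. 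This yields (iii) and the last assertion of (iv).

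Everything in the first two paragraphs is purely formal once Proposition 1 and (3) are available. The only genuine obstacle is the converse part of the last step — that triviality (resp. multiplication by $-1$) of $\sigma_*$ on degree‑$0$ zero‑cycles rational over the base forces $\Psi_X(\Gamma_\sigma)=\pm\,id_{t_2(X)}$. This is not formal, since an endomorphism of $t_2(X)$ is not detected by its action on such cycles in general; one must pass to the generic point through (2) and invoke the specialization machinery underlying that isomorphism, which is where the size of the base field enters.
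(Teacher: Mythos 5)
Your proof is correct, and for parts (i), (ii) and the first equivalence of (iv) it is essentially the paper's argument: both use Proposition 1 and the functoriality (3) to split $t_2(X)$ by the idempotent $\frac{1}{2}([\xi]+\alpha)$, identify $t_2(Y)$ with the $+1$-eigen summand of $\Psi_X(\Gamma_\sigma)$, and read the statements off this decomposition. The only real divergence is in the $A_0$-reformulations of (iii) and the last clause of (iv). The paper goes through the identification $Hom_{\sM_{rat}}(\un, t_2(X)^{-})\simeq A_0(X)^{-}_0$: if the anti-invariant part of $A_0(X)_0$ vanishes, then all Chow groups of the summand $M=t_2(X)^{-}$ vanish, and a cited lemma ([C-G Lemma 1]) yields $M=0$. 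You instead work inside $End_{\sM_{rat}}(t_2(X))\simeq A_0(X_{k(X)})/A_0(X)$ and prove the nontrivial converse by evaluating $\pi^{tr}_2\circ(\Gamma_\sigma\mp\Delta_X)\circ\pi^{tr}_2$ at the generic point, using a Bloch--Srinivas type spreading-out/specialization argument over an uncountable (universal-domain) ground field. The non-formal content is the same in both routes --- a correspondence acting as zero on $A_0$ over a large field induces zero on $t_2$ --- but the paper outsources it to the cited vanishing lemma, whereas you re-derive it from (2); a small advantage of your version is that it makes explicit where the size of the ground field enters, a hypothesis that is only implicit in the paper's appeal to that lemma, at the cost of reproving standard machinery. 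Your eigenvalue bookkeeping ($\sigma_*=\pm\,id$ on $A_0(X)_0$ versus $A_0(X)^{\sigma}_0=A_0(X)_0$ or $A_0(X)^{\sigma}_0=0$, using rational coefficients) is also correct and matches what the paper uses implicitly.
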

 
\begin {proof} Since $\Psi_{\tilde X,X}(\Gamma_{\beta})$ is an isomorphism and $\theta =\Psi_{\tilde X,Y}(\Gamma_f) \circ \Psi_{\tilde X ,X}(\Gamma_{\beta})^{-1}$\par
\noindent it is enough, after replacing $X$ by $\tilde X$,  to prove the Corollary under the assumption $\tilde X =X$. Then $\theta = \Psi_{X,Y}(\Gamma_f)$.  From Proposition 1 we get  that $\Gamma_f $ has a right inverse $1/2(\Gamma^t_f )$  and $ 2p =\Gamma^t_f \circ \Gamma_f=  \Delta_X +(1 \times \sigma)\Delta_X = \Delta_X + \Gamma_{\sigma}$.  It follows from the functoriality of $\Psi$ in (3) that , if $t_2(X)^{+}$ and $t_2(X)^{-}$ are the direct summands of $t_2(X)$ on which the involution $\Psi_X(\Gamma_{\sigma})$ acts respectively as $+1$ or $-1$,  then the restriction of $\theta$ to $t_2(X)^{-}$ is 0 and to $t_2(X)^{+}$ is an isomorphism. This gives (i).\par
\noindent  Also $\theta $ is an isomorphism iff $t_2(X)^{-}=0$ which is equivalent to $\Psi_X(\Gamma_{\sigma})$ being the identity in $End_{\sM_{rat}}(t_2(X))$. This gives (ii).\par
\noindent  If $q(X)=0$ then $A_0(X)_0=T(X)$ and we have a canonical isomorphism
$$ Hom_{\sM_{rat}}(\un , t_2(X)) \simeq A_0(X)_0$$
\noindent which is compatible with the action of correspondences. Hence , by taking the action of $\Psi_X(\Gamma_{\sigma})$ on $t_2(X$ we get

$$Hom_{\sM_{rat}}(\un , t_2(X)^{-}) \simeq A_0(X)^{-}_0 $$

\noindent Therefore $\Psi_X(\Gamma_{\sigma})$ acts as the identity on $t_2(X)$ iff $A_0(X)^{-}_0 = A_0(t_2(X)^{-})  =0$.  Since $A_i(t_2(X)) =0$ for $i\ne 0$,  we have 
 $A_i(M) =0$ for all  $i$,  where $M= t_2(X)^{-}$.  It follows that $M=0$ (see [C-G Lemma 1]. This proves (iii).\par
\noindent Clearly $t_2(Y) =0$ is equivalent to   $\bar \sigma([\xi])= - [\xi] \in End_{\sM_{rat}}(t_2(X)) $. Since the cycle class $[\xi]$ corresponds to the identity of  $End_{\sM_{rat}}(t_2(X))$ under the isomorphism in (2),  $\Psi_X(\Gamma_{\sigma})$ acts as $-1$ on $t_2(X)$.  Let $q(X)=0$ : then , by the same argument as in the proof of (iii) we get $A_0(X)^{\sigma}_0 =0$. This gives (iv).
\end{proof}

 F.Severi in  [Sev] has introduced the notions of    {\it valence } and    {\it indices } of a correspondence $T \in A^n(X \times X)$, where $X$ is  a smooth projective variety of  dimension $n$ .  In the case when $X$ is a surface,  Severi  related these notions to the computation of the degree of the cycle  $T \cdot \Delta_X$.\par
\begin {defn} Let $X$ be a smooth projective variety of dimension $n$. A correspondence   $T \in A^n(X \times X)$  has  valence 0 if it belongs to the ideal   of degenerate correspondences , i.e the ideal generated by  correspondences of the form  $[ V \times W]$, with $V,W$ proper  subvarieties of $X$.  A correspondence $\Gamma $ has valence $v$  if  $T =\Gamma +v \Delta_X$ has valence 0. If $T = T_1 +T_2 $ in $A^d(X \times X)$ and $T_1 ,T_2$ have valences $v_1 ,v_2$ then $T$ has valence $v_1 +v_2$ . If the correspondences $T$ and $T'$ have valences $v$ and $v'$ then  $v(T \circ T' )=-v(T) \cdot v(T')$; see[Fu 16 1.5.].  It follows that if , $p$ is a projector in $A^2(X \times X)$ which has a valence,  then $v(p)$ is either 0 or -1.\par
The indices of a correspondence $T$ are the numbers   $\alpha(T) =deg(T \cdot [P \times X])$ and  $\beta (T) = deg (T \cdot [X \times P])$, where $P$ is any rational point on $X$; see [Fu, 16 1.4]. The indices are additive in $T$ and $\beta(T) = \alpha (T^t)$.
\end{defn}

 \begin {thm} Let $X$ be a smooth projective surface with an involution $\sigma$  and let  $Y$ be the desingularization of  $X/<\sigma>$. Assume $p_g(X) >0$ and let $\Gamma_{\sigma} =  (1 \times \sigma)\Delta_X) $ . If the correspondence $\Gamma_{\sigma} $ has a valence, then
$$ t_2(Y) =0    \Longleftrightarrow v(\Gamma_{\sigma} ) =1     \    ;   \  \theta :  t_2(X) \iso t_2(Y)  \Longleftrightarrow  v(\Gamma_{\sigma}) =-1$$
\end{thm}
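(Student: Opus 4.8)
The plan is to reduce the statement to the two characterizations provided by Corollary~1 — namely $t_2(Y)=0\iff\bar\sigma([\xi])=-[\xi]$ and ``$\theta$ an isomorphism'' $\iff\bar\sigma([\xi])=[\xi]$ — by computing the endomorphism $\Psi_X(\Gamma_\sigma)$ of $t_2(X)$ directly from the valence. First I would argue that if $\Gamma_\sigma$ has valence $v=v(\Gamma_\sigma)$, then $\Gamma_\sigma+v\,\Delta_X$ is a degenerate correspondence in the sense of Definition~1. Every degenerate generator $[V\times W]$, with $V$ (or $W$) a proper closed subvariety of $X$, fails to be dominant over $X$ by at least one of the two projections, hence lies in the ideal $\sJ(X)$; consequently $\Gamma_\sigma+v\,\Delta_X\in\sJ(X)$. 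Applying the map $\Psi_X$, which kills $\sJ(X)$ and sends $\Delta_X$ to $\pi^{tr}_2=id_{t_2(X)}$, yields the key identity
$$\Psi_X(\Gamma_\sigma)=-v\cdot id_{t_2(X)}\qquad\text{in}\qquad End_{\sM_{rat}}(t_2(X)).$$

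Next I would pin down the possible values of $v$. By Proposition~1(i) the class $\alpha=\Psi_X(\Gamma_\sigma)$ satisfies $\alpha^2=[\xi]=id_{t_2(X)}$, so the key identity gives $v^2\cdot id_{t_2(X)}=id_{t_2(X)}$. The hypothesis $p_g(X)>0$ forces $t_2(X)\neq 0$: its cohomological realization is the transcendental part $H^2_{tr}(X)$ of $H^2(X)$, which contains $H^{2,0}(X)\neq 0$. Hence $id_{t_2(X)}\neq 0$ in the $\Q$-algebra $End_{\sM_{rat}}(t_2(X))$, and since $v\in\Z$ we conclude $v(\Gamma_\sigma)\in\{1,-1\}$. (Alternatively, $v(\Gamma_\sigma)=\pm 1$ follows from $\Gamma_\sigma\circ\Gamma_\sigma=\Delta_X$ together with $v(\Delta_X)=-1$ and the multiplicativity $v(T\circ T')=-v(T)v(T')$, or from the remark in Definition~1 that the projector $p=\frac{1}{2}(\Delta_X+\Gamma_\sigma)$ of Proposition~1(iii) has valence $0$ or $-1$.)

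Finally I would assemble the equivalences from the key identity and Corollary~1. If $v(\Gamma_\sigma)=1$ then $\Psi_X(\Gamma_\sigma)=-id_{t_2(X)}$, i.e. $\bar\sigma([\xi])=-[\xi]$, so $t_2(Y)=0$ by Corollary~1(iv); if $v(\Gamma_\sigma)=-1$ then $\Psi_X(\Gamma_\sigma)=id_{t_2(X)}$, so $\theta$ is an isomorphism by Corollary~1(ii). Conversely, if $t_2(Y)=0$ then $\Psi_X(\Gamma_\sigma)=-id_{t_2(X)}$ by Corollary~1(iv); comparing with the key identity and using $id_{t_2(X)}\neq 0$ forces $v(\Gamma_\sigma)=1$. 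Likewise, if $\theta$ is an isomorphism then $\Psi_X(\Gamma_\sigma)=id_{t_2(X)}$ by Corollary~1(ii), whence $v(\Gamma_\sigma)=-1$.

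The only delicate point is that the two alternatives $v(\Gamma_\sigma)=1$ and $v(\Gamma_\sigma)=-1$ must be mutually exclusive and jointly exhaustive — which is precisely where the assumption $p_g(X)>0$, equivalently $t_2(X)\neq 0$, is indispensable: were $t_2(X)$ to vanish, both conditions ``$t_2(Y)=0$'' and ``$\theta$ an isomorphism'' would hold trivially and could not be separated by the value of the valence. Everything else is routine bookkeeping with the functoriality~(3) of $\Psi$ recorded earlier, together with the identification of $\sJ(X)$ as the kernel of $\Psi_X$.
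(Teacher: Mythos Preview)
Your proof is correct and follows essentially the same route as the paper's: both arguments rest on the observation that a degenerate correspondence lies in $\sJ(X)=\Ker\Psi_X$, then invoke Corollary~1 and use $p_g(X)>0$ (hence $t_2(X)\neq 0$) to separate the two cases. The only cosmetic difference is that the paper works through the projector $p=\tfrac12(\Delta_X+\Gamma_\sigma)$ and the dichotomy $v(p)\in\{0,-1\}$ from Definition~1, whereas you apply $\Psi_X$ directly to $\Gamma_\sigma+v\Delta_X$ to obtain $\Psi_X(\Gamma_\sigma)=-v\cdot id_{t_2(X)}$ and then pin down $v=\pm 1$ from $\alpha^2=[\xi]$; this is marginally more direct but the content is the same.
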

\begin{proof}:   Let  $[\xi] = 1/2([\xi] +\alpha)  + 1/2([\xi] -\alpha)$ be the splitting  in $End_{\sM_{rat}}(t_2(X))$ coming from Proposition 1, with $\alpha = \bar \sigma([\xi])= \Psi_X(\Gamma_{\sigma})$.  If the correspondence $\Gamma_{\sigma}  $ has a  valence  then also the projector $p= 1/2(\Delta_X +(1\times \sigma)\Delta_X) = 1/2(\Delta_X +\Gamma_{\sigma})$ has  a valence and $v(p)$ is either 0 or -1.
Since $v(\Delta_X )=-1$ we have
$$ v(p) =0  \Longleftrightarrow v(\Gamma_{\sigma}) =1 \  ; \ v(p) =-1 \Longleftrightarrow v(\Gamma_{\sigma})= -1$$
\noindent Suppose $v(\Gamma)=1$ : then $v(p)=0 $ i.e $p$ belongs to the ideal of degenerate correspondences , which is contained in $Ker \  \Psi_X$. From $\Psi_X(p)=0$ we get  
 $ 1/2([\xi] +\Psi_X(\Gamma_{\sigma})) =0$ hence $\Psi_X(\Gamma_{\sigma})= - id_{t_2(X)}$. From Corollary (iv) we get $t_2(Y)=0$. Conversely if $t_2(Y)=0$, then  $\Psi_X(\Gamma_{\sigma})= - id_{t_2(X)}$ hence $\Psi_X(p)=0$ and we get $v(p) =0$.\par
 \noindent If $\theta :t_2(X) \to t_2(Y) $ is an isomorphism then, by Corollary 1 (ii)   $\Psi_X(\Gamma_{\sigma}) = id_{t_2(X)}$. By the same argument as before we get $v(\Gamma_{\sigma})= -1$. 
 \end{proof}

\begin {rk}The assumption $p_g(X)>0$ in Theorem 1 is necessary in order to have a uniquely defined valence for $\Gamma_{\sigma} $.  If $p_g(X)=0$ and $X$ satisfies Bloch's conjecture then , by the results in [B-S] , $v(\Delta_X)=0$, hence the correspondence  $\Delta_X$ has 2 different valences;   namely $0$ and $-1$.  Note that,   for a surface $X$,  a  correspondence $\Gamma$ can have 2 different valences $v$ and $v'$ only if $p_g(X)=0$ .This   was first observed by Severi in [Sev p.761]). In fact  then the   multiple $(v-v')$ of the diagonal $\Delta_X$ belongs to the ideal of degenerate correspondences  and this implies that $\Psi_X(\Delta_X) = 0$ in $End_{\sM_{rat}}(t_2(X))$.  Therefore 
the identity map is 0 in $End_{\sM_{rat}}(t_2(X)$.  Hence $t_2(X)=0$ and this may occur only if $p_g(X)=0$.
\end{rk}

\section { Complex K3 surfaces } 
 
  A smooth (irreducible) projective  K3 surface  $X$ over $\C$ is a regular surface (i.e $q(X)=0$),  therefore it has  a  refined Chow-K\"unneth decomposition ( see [KMP 2.2]) of the form
$h(X)=\sum_{0 \le i\le 4}h_i(X)$ with $h_1(X)=h_3(X)=0$. Moreover  $h_2(X) =h^{alg}_2(X) +t_2(X)$, where $t_2(X)= (X,\pi^{tr}_2,0)$ and 
$h^{alg}_2(X) \simeq \mathbf{L}^{\oplus \rho(X)}$. Here $\rho(X)$  is  the  rank of the $NS(X)_{\Q} = (Pic X)_{\Q}$ so that  $1 \le  \rho  \le
20$. Moreover   
$$H^i(t_2(X))=0  \   for \   i \ne 2    \  ;   \  H^2(t_2(X)) =\pi^{tr}_2 H^2(X,\Q) =H^2_{tr}(X,\Q),$$  
$$A_i(t_2(X))=\pi^{tr}_2 A_i(X)=0    \  for  \    i \ne 2   \  ; \  A_0(t_2(X)) =T(X),$$  
\noindent where $T(X)$ is the Albanese Kernel. Since $q(X) =0$, we also have $T(X) =A_0(X)_0$ (0-cycles of degree 0) and
$$ dim H^2(X) = b_2(X) =22  \ ; \  dim H^2_{tr}(X) = b_2(X)  - \rho  $$
A Nikulin involution $i$ of  a complex K3 surface $X$ is a symplectic  automorphism of order 2 , i.e such that $i^*\omega=\omega$ for all $\omega\in H^{2,0}(X)$.  A  K3 surface $X$ with a Nikulin involution has  rank $\rho(X) \ge 9$ .The Neron-Severi group $NS(X)$ contains a primitive sublattice isomorphic to $E_8(-2)$ where $E_8$ is the unique even unimodular positive  defined lattice of rank 8 (see [Mor p.106]). Here, if $L$ is a lattice and $m$ is an integer, $L(m)$ denotes same  free $\Z$-module $L$ with a form which has been altered by multiplication by $m$, that is $b_{L(m)}(x,y) = m(b_{L}(x,y))$, where $b_{L}(x,y)$ is the $\Z$-valued  symmetric bilinear form of $L$. By $T_X$ we will denote the transcendental lattice of $X$,  i.e $T_X =NS(X)^{\perp} \subset H^2(X,\Z)$.
For any K3 surface with a Nikulin involution $i$ there is an  isomorphism
$$H^2(X,\Z) \simeq U^3 \oplus E_8(-1) \oplus E_8(-1)$$
\noindent where $U$ is the hyperbolic plane, such that $i^*$ acts as follows 
  $$i^*(u,x,y)=(u,y,x)$$
\noindent The invariant sublattice is $H^2(X,\Z)^{i} \simeq U^3 \oplus E_8(-2)$ and 
$(H^2(X,\Z)^i)^{\perp} \simeq E_8(-2)$.  Since $i^*\omega= \omega$ for all $\omega \in H^{2,0}(X)$ we also have $(H^2(X,\Z)^i)^{\perp} \subset NS(X)$ ( see [VG-S 2.1]).
Therefore the involution $i$ acts as the identity on $H^2_{tr}(X,\Q)$.\par
\noindent  Let $ X \to X/<i> $ be the quotient map. The surface $X/<i>$ has $8$ ordinary double points $Q_1,\cdots,Q_8 $ corresponding to the 8 fixed points $P_1,\cdots,P_8$ of the involution $i$ on $X$. The minimal model $Y$ of $X/<i>$  is a K3 surface, hence $p_g(Y) >0$.  In the following we will  always consider the {\it standard diagram} for a K3 surface with a Nikulin involution $i$ (see [Mor sec. 3])
$$ \CD \tilde X @>{\beta}>>X \\
@V{f}VV   @VVV \\
 Y@>>> X/<i> \endCD  $$
\noindent $\tilde X$ is the blow up of $X$ at the points $P_1,\cdots,P_8$ with exceptional divisors $\beta^{-1}(P_j) =E_j$. The Nikulin involution  extends to an involution $i$ 
on $\tilde X$ and $Y =\tilde X/<i>$. $ f $ is a double cover branched on the divisor $\sum_{1 \le j \le 8}C_j$ where 
$C_j =f(E_j)$ are disjoint smooth irreducible rational curves corresponding to the points $Q_1,\cdots,Q_8$. Therefore $1/2(\sum_j C_j) \in NS(Y)$. The map $f_* \circ \beta^*$ induces an isomorphism of rational Hodge structures
$$T_{\tilde X}\otimes \Q \simeq T_X \otimes \Q \simeq T_{Y}\otimes \Q$$
\noindent where $T_X$ and $T_{ Y}$ are the transcendental lattices . In particular  the vector spaces $H^2_{tr}(X, \Q)$ and $H^2_{tr}( Y, \Q)$ have the same dimension, so that $22 -\rho(X) = 22 -\rho(Y)$.\par
Suppose conversely that a K3  surface $Y $admits an even set of $k$ disjoint rational curves $C_1,\cdots ,C_k$ : this means that there exists a $\delta \in Pic Y$ such that
$$C_1 +\cdots+C_k \sim 2 \delta$$
This is equivalent to the existence of a double cover  $X$ of $Y$ branched on $C_1 +\cdots +C_k$. Then,  by [N 1] , $k =0,8,16$. If $k =16$ then $X$ is birational to an abelian surface $A$ and $Y$ is the Kummer surface of $A$. Therefore the motives $h(X)$ and $h(Y)$ are finite dimensional and $t_2(A) \simeq t_2(X) \simeq t_2(Y)$ (see [KMP 6.13]).
If $k =8$ then $X$ is a K3 surface  ,  $Y$ is the desingularization of the quotient of  $X$ by a Nikulin involution $i$. Hence $Y$ is a K3 surface.\par  \medskip
 \begin {thm} Let $X$ be a smooth projective K3 surface over $\C$ with $\rho(X) =19,20$. Then the motive $h(X) \in \sM_{rat}(\C)$ is finite dimensional and  lies in the subcategory  of $\sM_{rat}(\C)$ generated by the motives of abelian varieties.
 \end{thm}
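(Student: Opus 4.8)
The plan is to realize $t_2(X)$ as the transcendental motive of an abelian surface, by combining the Shioda--Inose construction with the results of \S2. Since $\rho(X)=19$ or $20$, the lattice $NS(X)$ contains a primitive copy of $E_8(-1)\oplus E_8(-1)$, so by Morrison's theorem [Mor] the surface $X$ carries a Shioda--Inose structure: there is a Nikulin involution $\iota$ on $X$ whose associated surface $Y$ (the desingularization of $X/\langle\iota\rangle$ in the standard diagram) is a Kummer surface $Y\simeq\operatorname{Km}(A)$ of an abelian surface $A$ --- with $A\sim E_1\times E_2$ a product of CM elliptic curves when $\rho=20$ and $\rho(A)=3$ when $\rho=19$ --- together with a Hodge isometry $T_X\otimes\Q\simeq T_A\otimes\Q$. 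By Kimura, $h(A)$ is finite dimensional and lies in the tensor subcategory $\sM^{\ab}_{\rat}(\C)$ of $\sM_{\rat}(\C)$ generated by abelian varieties; the same then holds for $h(Y)=h(\operatorname{Km}(A))$, and in particular $t_2(Y)\simeq t_2(A)$ (using [KMP 6.13]; this is consistent with Corollary 1(ii) applied to the quotient $A\to\operatorname{Km}(A)$, since $(-1)_A$ acts as $+1$ on $h_2(A)$, hence trivially on $t_2(A)$).

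Next I would apply Corollary 1 to the pair $(X,\iota)$. Because $\iota$ is symplectic it acts as the identity on $H^2_{tr}(X,\Q)$, so the cohomological realization of $\theta:t_2(X)\to t_2(Y)$ on $H^2_{tr}$ is the Hodge isometry $f_*\circ\beta^*$ of the Shioda--Inose structure; in particular it is an isomorphism, the two sides having the same dimension $22-\rho(X)=22-\rho(Y)$. By Corollary 1(i) there is a decomposition $t_2(X)=t_2(X)^{+}\oplus t_2(X)^{-}$ in which $\theta$ restricts to an isomorphism $t_2(X)^{+}\iso t_2(Y)\simeq t_2(A)$ and vanishes on $t_2(X)^{-}$. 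Thus $t_2(X)^{+}$ is finite dimensional and lies in $\sM^{\ab}_{\rat}(\C)$, while $t_2(X)^{-}$ is a homologically (hence numerically) trivial direct summand of $t_2(X)$, with $A_0(t_2(X)^{-})=A_0(X)^{-}_0$, the $(-1)$-eigenspace of $\iota_*$ on the Albanese kernel $T(X)$.

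The decisive step --- and the only place where the hypothesis $\rho=19,20$ is really used --- is to show $t_2(X)^{-}=0$, equivalently (Corollary 1(iii)) that $\iota$ acts as the identity on $T(X)=A_0(X)_0$. This is the analogue, for these K3 surfaces, of Bloch's conjecture for $X/\langle\iota\rangle$, and it is where I expect the main work to lie. The approach I would take is to use the Shioda--Inose geometry --- the chain of degree-$2$ quotient maps relating $X$, $\operatorname{Km}(A)$ and $A$, and the even set of $8$ disjoint rational curves on $Y$ --- to exhibit $t_2(X)$ itself, and not merely its summand $t_2(X)^{+}$, as a direct summand of the abelian motive $t_2(A)$; then $t_2(X)\in\sM^{\ab}_{\rat}(\C)$, where Kimura's conjecture is a theorem, so the homologically trivial idempotent cutting out $t_2(X)^{-}$ is nilpotent and hence $0$. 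Once $t_2(X)^{-}=0$ we obtain $t_2(X)\simeq t_2(A)$, which is finite dimensional and abelian; together with the remaining Chow--K\"unneth summands $\un$, $h^{alg}_2(X)\simeq\mathbf L^{\oplus\rho(X)}$ and $\mathbf L^{2}$, which obviously lie in $\sM^{\ab}_{\rat}(\C)$, this gives that $h(X)$ is finite dimensional and lies in the subcategory generated by abelian varieties. The main obstacle is precisely this promotion from cohomology to Chow motives: the Shioda--Inose structure is visible at the level of Hodge structures and homological motives, and controlling $A_0(X)$ --- equivalently, whether $\iota$ acts as the identity on $End_{\sM_{\rat}}(t_2(X))$ --- does not come for free.
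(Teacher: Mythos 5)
Your setup coincides with the paper's: Morrison's theorem gives the Shioda--Inose structure (a Nikulin involution $i$ with $Y$ a Kummer surface of an abelian surface $A$), $h(Y)$ is finite dimensional with $t_2(Y)\simeq t_2(A)$ by [KMP 6.13], and Corollary 1 reduces everything to showing that the summand $t_2(X)^{-}$ vanishes, equivalently that $i$ acts as the identity on $A_0(X)_0$. You correctly isolate this as the decisive step, but you do not prove it, and the route you sketch for it is circular: to ``exhibit $t_2(X)$ itself as a direct summand of $t_2(A)$'' you would need a correspondence in $A^2(A\times X)$ (or a chain of such) inducing a Chow-level splitting, and the Shioda--Inose structure only supplies the maps going the other way ($\tilde X\to Y$, $A\to Y$ after blowing up) together with a Hodge isometry $T_X\otimes\Q\simeq T_A\otimes\Q$. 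Producing a Chow-theoretic lift of that Hodge isometry is essentially equivalent to the statement being proved (it is the content of the Kuga--Satake/Nikulin-type correspondence problem at the level of Chow groups), so as written the argument has a genuine gap exactly at the point you flag as ``the main obstacle.''

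The paper closes this gap with an external input, not with the motivic formalism of \S 2: by [Mor 6.3(iv)] the hypothesis $\rho(X)=19,20$ forces $NS(X)$ to contain $E_8(-1)^{2}$, and Huybrechts' results [Huy 6.3, 6.4] then assert that a symplectic involution on a K3 surface whose N\'eron--Severi lattice contains $E_8(-1)^{2}$ acts as the identity on $A_0(X)$. That is precisely the statement $A_0(X)^{i}_0=A_0(X)_0$ you need; Corollary 1 then gives $t_2(X)\simeq t_2(Y)\simeq t_2(A)$, and finite dimensionality plus membership in the subcategory generated by abelian varieties follows as in your last step. So your reduction is sound and matches the paper, but the crucial eigenspace-vanishing step requires citing (or reproving) a result of Huybrechts type on the action of symplectic involutions on zero-cycles; it cannot be extracted from the Shioda--Inose geometry by the general correspondence arguments you invoke.
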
 
\begin {proof}  By [Mor 6.4] $X$ admits a  Shioda-Inose structure, i.e there is a Nikulin involution $i$ on $X$ such that the desingularization $Y$ of the quotient surface $X/<i>$ is a Kummer surface, associated to an abelian surface $A$.  Hence $h(Y)$ is finite dimensional. The rational map $f : X \to Y$ induces a splitting $t_2(X) \simeq t_2(Y) \oplus N$. Since $t_2(Y) $ is finite dimensional  we are left to show that $N=0$ . From Corollary 1  the vanishing of $N$ is equivalent to $A_0(X)^i_0 =A_0(X)_0 $.   By  [Mor 6.3 (iv)]  the Neron -Severi group of $X$ contains the sublattice $E_8(-1)^2$ .  Hence by the results in [Huy 6.3 , 6.4], the  symplectic automorphism  $i$   acts as the identity on$A_0(X)$. From Corollary 1 we get  $t_2(X) =t_2(Y)$. By [KMP 6.13]  $t_2(Y) =t_2(A)$;  therefore $h(X) $ is finite dimensional and lies  in the subcategory  of $\sM_{rat}(\C)$ generated by the motives of abelian varieties.
\end{proof}
\begin {rk} Note that  by [Mo 2.10 (i) ,4.4(i)] there exist K3 surfaces with $\rho(X)= 19,20$ which are not Kummer surfaces. \end{rk}

Next we show that for every K3 surface with a Nikulin involution the finite dimensionality of $h(X)$ implies $h(X) \simeq h(Y)$.\par
\begin {lm} Let $X$ be a K3 surface over $\C$ with a Nikulin involution $i$and let  $ Y$ be a desingularization of the quotient surface $X/<i>$.Let  $e(-)$ be the topological Euler characteristic . Then we have
$$e(X) +t  +2 +2k = 2e(Y)$$
where $t$ is the trace of the involution $i$ on $H^2(X, \C)$  and $k=8$ is the number of  the isolated fixed points of $i$. Therefore   $\rho (X) =\rho (Y)$ and $t=6$ 
\end{lm}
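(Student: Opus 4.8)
The plan is to compute the topological Euler characteristic of $\tilde X$ in two ways from the standard diagram, and then to feed the outcome into the topological Lefschetz fixed point formula for $i$ acting on $X$.

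First I would pin down the fixed locus of the lifted involution on $\tilde X$. A Nikulin involution has exactly the $k=8$ isolated fixed points $P_1,\dots,P_k$ on $X$ and no fixed curve; at each $P_j$ the differential $di_{P_j}\in GL(T_{P_j}X)$ has order $2$ (it has order dividing $2$ since $i^2=\mathrm{id}$, and it cannot be trivial, for a finite-order automorphism linearizes near a fixed point, so a trivial linear part would make $i$ the identity near $P_j$, contradicting that $P_j$ is isolated) and determinant $1$ (because $i$ is symplectic), hence $di_{P_j}=-\mathrm{id}$. Therefore $i$ acts trivially on each exceptional line $E_j=\P(T_{P_j}X)\simeq\P^1$, the fixed locus of $i$ on $\tilde X$ is the disjoint union $E_1\sqcup\cdots\sqcup E_k$, and $f\colon\tilde X\to Y$ is a finite double cover ramified precisely along $B=C_1+\cdots+C_k$, with each $C_j=f(E_j)\simeq\P^1$ and $f^{-1}(B)=E_1\sqcup\cdots\sqcup E_k$.

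Next come the two Euler characteristic computations. Blowing up a surface at a point increases $e$ by $1$, so $e(\tilde X)=e(X)+k$. On the other hand $\tilde X\setminus f^{-1}(B)\to Y\setminus B$ is an honest unramified double cover and each $E_j\to C_j$ is an isomorphism, so additivity of $e$ gives $e(\tilde X)=2\bigl(e(Y)-e(B)\bigr)+e(B)=2e(Y)-2k$. Equating the two expressions yields $e(X)+3k=2e(Y)$. Now the topological Lefschetz fixed point formula for $i$ on $X$ reads $\sum_p(-1)^p\operatorname{tr}(i^*\mid H^p(X,\C))=e(\mathrm{Fix}(i))=k$; since $i$ has degree $1$ and $H^1(X,\C)=H^3(X,\C)=0$ the left-hand side equals $1+t+1$, so $k=t+2$. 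Substituting this for one copy of $k$, $e(X)+3k=e(X)+t+2+2k$, which is the asserted identity.

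For the two numerical consequences: $X$ and $Y$ are both K3 surfaces, so $e(X)=e(Y)=24$ and $b_2(X)=b_2(Y)=22$; plugging $e(X)=e(Y)=24$ and $k=8$ into the identity gives $24+t+2+16=48$, i.e. $t=6$, while the isomorphism of rational Hodge structures $H^2_{tr}(X,\Q)\simeq H^2_{tr}(Y,\Q)$ induced by $f_*\circ\beta^*$ (recorded above) forces $22-\rho(X)=\dim H^2_{tr}(X,\Q)=\dim H^2_{tr}(Y,\Q)=22-\rho(Y)$, hence $\rho(X)=\rho(Y)$. I expect the only genuinely delicate point to be the first step, the identification $di_{P_j}=-\mathrm{id}$, where the symplectic hypothesis is essential; everything afterwards is additivity of Euler characteristics together with one application of the Lefschetz fixed point theorem. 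As an independent check, $t=6$ also follows directly from the lattice fact $(H^2(X,\Z)^i)^\perp\simeq E_8(-2)$ recalled above, which makes the $(-1)$-eigenspace of $i^*$ on $H^2(X,\Q)$ eight-dimensional and the $(+1)$-eigenspace fourteen-dimensional.
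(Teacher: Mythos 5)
Your proof is correct and follows essentially the same route as the paper: the displayed identity is obtained by combining the topological Lefschetz fixed point formula ($k=t+2$) with the Euler characteristic bookkeeping for the blow-up and the branched double cover $\tilde X\to Y$ (the step the paper delegates to [D-ML-P 4.2]), and then $t=6$ and $\rho(X)=\rho(Y)$ follow exactly as in the paper from $e(X)=e(Y)=24$, $b_2=22$, and the isomorphism $H^2_{tr}(X,\Q)\simeq H^2_{tr}(Y,\Q)$. The extra details you supply (that $di_{P_j}=-\mathrm{id}$, so the fixed locus on $\tilde X$ is the union of the exceptional curves) are accurate and simply make explicit what the paper takes from the standard diagram of [Mor].
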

\begin{proof}: We use the same argument as  in [D-ML-P 4.2] . Since $i$ has only isolated fixed points from the topological fixed point formula we get
 $$e(X) +t +2 = 2e(Y)- 2k$$
\noindent Since $X$ and $Y$ are  both K3 surfaces we have $e(X) =e(Y)  =24$. Therefore, we get $t =6$. Since $dim \ H^{tr}_2(X) = dim \ H^{tr}_2(Y)$ and $b_2(X)=b_2(Y) =22$,  we have $\rho(X) =\rho(Y)$
\end{proof}  

 \begin{thm}  Let $X$ be K3 surface with a Nikulin involution $i$. If $h(X)$ is finite dimensional then  $h( X) \simeq h( Y)$. \end{thm}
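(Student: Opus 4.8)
The plan is to compare the refined Chow--K\"unneth decompositions of $h(X)$ and $h(Y)$ summand by summand, the only delicate point being the transcendental parts. Both $X$ and $Y$ are K3 surfaces, hence regular, so $h_1=h_3=0$ for each and
$$h(X) = \un \oplus h^{alg}_2(X) \oplus t_2(X) \oplus \mathbf{L}^2, \qquad h(Y) = \un \oplus h^{alg}_2(Y) \oplus t_2(Y) \oplus \mathbf{L}^2,$$
with $h^{alg}_2(X) \simeq \mathbf{L}^{\oplus \rho(X)}$ and $h^{alg}_2(Y) \simeq \mathbf{L}^{\oplus \rho(Y)}$. By Lemma 2 we have $\rho(X)=\rho(Y)$, hence $h^{alg}_2(X) \simeq h^{alg}_2(Y)$, and it therefore suffices to prove $t_2(X) \simeq t_2(Y)$.

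To do this I would feed the standard diagram into Corollary 1. Write $t_2(X)=t_2(X)^{+}\oplus t_2(X)^{-}$ for the $\pm1$ eigen-summands of the involution $\Psi_X(\Gamma_i)\in End_{\sM_{rat}}(t_2(X))$; by Corollary 1 and its proof the morphism $\theta:t_2(X)\to t_2(Y)$ is a split epimorphism which restricts to an isomorphism $t_2(X)^{+}\iso t_2(Y)$ and to $0$ on $N:=t_2(X)^{-}$, so that $t_2(X)\simeq t_2(Y)$ is equivalent to $N=0$. Here the symplectic hypothesis enters through cohomology: since $i$ acts as the identity on $H^2_{tr}(X,\Q)=H^2(t_2(X))$, the realization of $\Psi_X(\Gamma_i)$ on $H^{*}(t_2(X))$ is the identity, so by Proposition 1(iii) the idempotent $\tfrac12([\xi]-\Psi_X(\Gamma_i))$ cutting out $N$ realizes to $\tfrac12(\mathrm{id}-\mathrm{id})=0$; as $H^{j}(t_2(X))=0$ for $j\neq2$, the motive $N$ is homologically trivial.

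Now the finite dimensionality hypothesis does the work. Being a direct summand of $t_2(X)$, hence of $h(X)$, the motive $N$ is finite dimensional, so its identity $\mathrm{id}_N$ --- a homologically trivial endomorphism --- is nilpotent by Kimura's nilpotence theorem; since $\mathrm{id}_N$ is idempotent this forces $\mathrm{id}_N=0$, i.e. $N=0$. Combined with the first paragraph this would give $h(X)\simeq h(Y)$.

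The only genuine obstacle is the vanishing $N=0$; matching the remaining Chow--K\"unneth summands and quoting Lemma 2 for the Picard ranks is bookkeeping. Conceptually $N$ is an a priori possibly nonzero ``phantom'' summand of $t_2(X)$ carrying no cohomology, and it cannot be excluded without finite dimensionality --- which is exactly why the statement is conditional. (By contrast, for the surfaces in Theorem 2 the analogous $N$ was killed by showing that $i$ acts trivially on $A_0(X)$ via the $E_8(-1)^{2}$ sublattice, an argument unavailable for a general Nikulin involution.)
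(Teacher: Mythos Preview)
Your argument is correct and follows the same architecture as the paper's proof: reduce to $t_2(X)\simeq t_2(Y)$ via Lemma~2 and the Chow--K\"unneth decomposition, use Corollary~1 to write $t_2(X)\simeq t_2(Y)\oplus N$, show $H^*(N)=0$ from the symplectic condition, and kill $N$ by finite dimensionality. The only difference is cosmetic: the paper verifies $H^*(N)=0$ by exhibiting explicit orthogonal bases for $NS(X)\otimes\C$, $NS(\tilde X)\otimes\C$, $NS(Y)\otimes\Q$ (using the trace computation $t=6$ from Lemma~2) to conclude that $H^*(X)$ and $H^*(Y)$ are isomorphic as graded vector spaces, whereas you argue directly that the projector $\tfrac12(\mathrm{id}-\Psi_X(\Gamma_i))$ realises to zero on $H^2_{tr}(X)$ because $i^*$ is the identity there --- a cleaner route to the same conclusion. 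Your invocation of Kimura's nilpotence theorem for the final step is exactly the content of [Ki~7.3] as cited in the paper.
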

 
 \begin{proof}   $Y$ is a K3 surface and we have    $t_2( \tilde X) =t_2(X)$ because $t_2(-)$ is a
birational invariant for surfaces.Also 
$$H^2_{tr}(X) \simeq H^2_{tr}( \tilde X) \simeq  H^2_{tr}(Y)$$ 
 because the Nikulin involution acts trivially on $H^2_{tr}(X)$. 
Let $t$ be  the trace of the involution $\sigma$ on  the vector space $H^2(X, \C)$. From Lemma 2 we get $t=6$. The involution $i$ acts trivially on $H^2_{tr}(X)$ and $H^2_{tr}(X)$ is a subvector space of $H^2(X,\C)$ of dimension $22-\rho $.  Therefore the trace of the action of $i$ on $NS(X)\otimes \C$ equals $\rho-16$ . Since the only eigenvalues of  an involution are +1 and -1 we can find an orthogonal basis for $NS(X) \otimes \C$ of the form $H_1,\cdots H_r; D_1, \cdots D_8 $, with $r =\rho-8 \ge 1$  such that $i_*(H_j) =H_j$ and $i_*(D_l) =- D_l$. Then $NS(\tilde X)\otimes \C$ has a basis of the form $E_1,\cdots E_8;H_1,\cdots H_r; D_1, \cdots D_8$, where  $E_h$ , for $1 \le h \le 8 $ are the exceptional divisors of the blow up $\tilde X \to X$.  The set of  $r +8=\rho$ divisors $f_*(E_h)=C_k $,  for $1 \le h \le  8$ and $f_*(H_ j) \simeq H_j $, for $ 1 \le  j \le r$  gives an orthogonal basis for $NS(Y)\otimes \Q$. Since $q(X) =q(Y) =q(\tilde X)=0$ we can find Chow-K\"unneth decompositions  for $h(X)$, $h(\tilde X)$  and  $h(\tilde Y)$ of the form

$$h(X) =  \un \oplus  h^{alg}_2(X)  \oplus t_2(X) \oplus \mathbf L^2 \simeq \un \oplus \mathbf L^{\otimes \rho}\oplus t_2(X) \oplus \mathbf L^2 $$

$$ h(\tilde X) = \un \oplus h^{alg}_2(\tilde X) \oplus t_2(X) \oplus \mathbf L^2 \simeq  h(X) \oplus \mathbf L^{\oplus 8}$$ 

$$ h(Y) = \un \oplus h^{alg}_2(Y) \oplus t_2(Y) \oplus \mathbf L^2 \simeq   \un \oplus \mathbf L^{\otimes \rho}\oplus t_2(Y) \oplus \mathbf L^2 $$

 \noindent where $\rho(X)=\rho(Y)=\rho$. By Corollary 1 we have
$t_2(X) \simeq t_2(Y) \oplus N$ for some $N \in \sM_{rat}$. Since $h(X)$ is finite dimensional also $N$ is finite dimensional. Since $H (X)$ and $H (Y)$ are isomorphic as graded vector spaces $H(N)=0$. By [Ki 7.3]  $N=0$. Therefore $h(X) \simeq h(Y)$.
\end{proof}

 \begin {thm} 
 Let $X$ be a K3 surface with a Nikulin involution $i$. Then the following conditions are equivalent:\par
 (i)  the correspondence $ \Gamma_i = (1 \times i)\Delta_X)$ has a valence.\par   
 (ii)  $ \theta : t_2(X) \iso  t_2(Y)$.  \par 
 (iii) $\bar i([\xi]) =[\xi]$ in $End_{\sM_{rat}}(t_2(X)$. \par
 (iv) $i$ acts as the identity on $A_0(X)_0$
   \end{thm}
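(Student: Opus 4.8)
The plan is to close the cycle $(i)\Rightarrow(ii)\Leftrightarrow(iii)\Leftrightarrow(iv)\Rightarrow(i)$, using Corollary 1 and Theorem 1 together with a direct analysis of the graph $\Gamma_i$ inside the refined Chow--K\"unneth decomposition $h(X)=\un\oplus h^{alg}_2(X)\oplus t_2(X)\oplus\mathbf L^2$. The equivalences $(ii)\Leftrightarrow(iii)\Leftrightarrow(iv)$ are formal: $(ii)\Leftrightarrow(iii)$ is Corollary 1 (ii), and since $q(X)=0$, Corollary 1 (iii) identifies $(iii)$ with the condition $A_0(X)^{i}_0=A_0(X)_0$, which is exactly $(iv)$; moreover, as $i$ is an involution this forces $[i(x)]=[x]$ in $A_0(X)$ for every closed point, so under $(iv)$ one has $i_*=\operatorname{id}$ on all of $A_0(X)$.

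For $(i)\Rightarrow(ii)$ I would invoke Theorem 1, which applies because $X$ is a K3 surface and hence $p_g(X)=1>0$. If $\Gamma_i$ has a valence then so does the projector $p=\frac{1}{2}(\Delta_X+\Gamma_i)$ (cf.\ Proposition 1; it is a projector since $\Gamma_i\circ\Gamma_i=\Delta_X$), and by Definition 1 a projector with a valence has valence $0$ or $-1$. Since the valence is additive and $\Q$-linear and $v(\Delta_X)=-1$, this gives $v(\Gamma_i)=2v(p)+1\in\{1,-1\}$. If $v(\Gamma_i)=1$, then $t_2(Y)=0$ by Theorem 1; but $Y$, the minimal resolution of $X/\langle i\rangle$, is again a K3 surface, so $\dim H^2_{tr}(Y,\Q)=22-\rho(Y)\ge 2$ and hence $t_2(Y)\neq 0$ --- a contradiction. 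Therefore $v(\Gamma_i)=-1$, and Theorem 1 yields $\theta\colon t_2(X)\iso t_2(Y)$.

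The substantive implication is $(iii)\text{ and }(iv)\Rightarrow(i)$; here I would show directly that $\Gamma_i-\Delta_X$ lies in the ideal $\mathcal D$ of degenerate correspondences, so that $\Gamma_i+(-1)\Delta_X\in\mathcal D$, i.e.\ $v(\Gamma_i)=-1$. Because $i_*=\operatorname{id}$ on $A_0(X)$, the class $\Gamma_i$ commutes in $A^2(X\times X)$ with $\pi_0$ and $\pi_4$ (these are the classes $[X\times e]$ and $[e\times X]$ in some order, and the commutation is immediate from $[i(e)]=[e]$). It also commutes with $\pi^{alg}_2=\sum_h\frac{[D_h\times D_h]}{D_h^2}$: conjugation by $\Gamma_i$ (using $\Gamma_i^t=\Gamma_i$) replaces each $[D_h\times D_h]$ by $[i(D_h)\times i(D_h)]$ and each $D_h^2$ by $i(D_h)^2=D_h^2$ (as $i^*$ is an isometry), so $\Gamma_i\circ\pi^{alg}_2\circ\Gamma_i$ is the same expression formed from the orthogonal basis $\{i^*D_h\}$ of $NS(X)_\Q$, hence equals $\pi^{alg}_2$ --- the point being that for a K3 surface $\Pic(X)=NS(X)$, so $\pi^{alg}_2$ depends only on the classes of the $D_h$ and is independent of the chosen orthogonal basis. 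Consequently $\Gamma_i$ commutes with $\pi^{tr}_2=\Delta_X-\pi_0-\pi^{alg}_2-\pi_4$ as well, and therefore $\Gamma_i=\pi_0\Gamma_i\pi_0+\pi^{alg}_2\Gamma_i\pi^{alg}_2+\pi^{tr}_2\Gamma_i\pi^{tr}_2+\pi_4\Gamma_i\pi_4$. The first and last summands equal $\pi_0$ and $\pi_4$ (they lie in $\operatorname{End}(\un)=\operatorname{End}(\mathbf L^2)=\Q$ and realize to $i^*=\operatorname{id}$ on $H^0(X)$ and $H^4(X)$), and the third is $\Psi_X(\Gamma_i)=\operatorname{id}_{t_2(X)}=\pi^{tr}_2$ by $(iii)$. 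Subtracting $\Delta_X=\pi_0+\pi^{alg}_2+\pi^{tr}_2+\pi_4$ leaves $\Gamma_i-\Delta_X=\pi^{alg}_2\Gamma_i\pi^{alg}_2-\pi^{alg}_2$, which lies in $\mathcal D$ because $\pi^{alg}_2\in\mathcal D$ (a $\Q$-linear combination of products of curves) and $\mathcal D$ is a two-sided ideal. This proves $(i)$.

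The step I expect to be the main obstacle is precisely this last implication: one must arrive at a statement about \emph{rational} equivalence ($\Gamma_i-\Delta_X\in\mathcal D$), not merely a cohomological one, and what makes this possible is the vanishing of $\Pic^0$ for a K3 surface, which promotes the evident cohomological commutation of $\Gamma_i$ with the algebraic Chow--K\"unneth projector to an identity of Chow correspondences. Care is also needed in fixing the composition conventions and in verifying the displayed graph identities (or, failing that, in replacing them by the corresponding identities in $\Pic$ of the triple product $X\times X\times X$).
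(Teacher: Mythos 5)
Your proof is correct, and for three of the four arrows it is the paper's own argument: the equivalences (ii)$\Leftrightarrow$(iii)$\Leftrightarrow$(iv) are quoted from Corollary 1 exactly as in the text, and your (i)$\Rightarrow$(ii) --- the projector $\tfrac12(\Delta_X\pm\Gamma_i)$ has valence $0$ or $-1$ since $p_g(X)=1>0$ makes the valence unique, the value $v(\Gamma_i)=1$ is excluded because Theorem 1 would then give $t_2(Y)=0$ while $Y$ is again a K3 surface, hence $v(\Gamma_i)=-1$ and Theorem 1 gives $\theta$ an isomorphism --- is precisely the paper's computation (the paper works with $q=\tfrac12(\Delta_X-\Gamma_i)$ instead of your $p$). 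The genuine divergence is the return implication. The paper closes the loop as (ii)$\Rightarrow$(i) in one line: $\Psi_X(\Delta_X-\Gamma_i)=0$ by Corollary 1(ii), and since $q(X)=0$ the kernel of $\Psi_X$ coincides with the ideal of degenerate correspondences, so $\Delta_X-\Gamma_i$ is degenerate and $v(\Gamma_i)=-1$. You instead prove (iii)\&(iv)$\Rightarrow$(i) by hand: from (iv) (correctly upgraded to $i_*=\mathrm{id}$ on all of $A_0(X)$, e.g.\ by applying it to $[x]-[i(x)]$) you get commutation of $\Gamma_i$ with $\pi_0,\pi_4$; the basis-independence of $\pi^{alg}_2$ as a Chow class (valid because $\Pic^0(X)=0$) gives $\Gamma_i\circ\pi^{alg}_2\circ\Gamma_i=\pi^{alg}_2$, hence commutation there too since $\Gamma_i\circ\Gamma_i=\Delta_X$; and (iii) identifies the transcendental block, so $\Gamma_i-\Delta_X=\pi^{alg}_2\circ\Gamma_i\circ\pi^{alg}_2-\pi^{alg}_2\in\mathcal{D}$, i.e.\ $v(\Gamma_i)=-1$. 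This is a legitimate, self-contained substitute for the paper's appeal to the nontrivial [KMP]-based identification of $\Ker\Psi_X$ with the degenerate ideal for regular surfaces: the cost is checking the graph identities (clean via $\Gamma_i\circ Z=(1\times i)_*Z$ and $Z\circ\Gamma_i=(i\times 1)_*Z$), and the gain is an explicit exhibition of $\Gamma_i+(-1)\Delta_X$ as a degenerate class, independent of that citation; the paper's route is shorter but leans on the structure theory of $\Ker\Psi_X$.
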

 \begin {proof} Let 
 $$ \Delta_X = 1/2 (\Delta_X + (1\times i)\Delta_X) + 1/2 (\Delta_X -(1\times i)\Delta_X)$$
 as in Proposition 1 and let $\Gamma_i = (1\times i)\Delta_X $.  If  $\Gamma_i$ has a valence then also the projector $ q=1/2 (\Delta_X -(1\times i)\Delta_X)$ has a valence
 and $v(q)$ is either 0 or -1. Suppose that    $v (1/2 (\Delta_X -(1\times i)\Delta_X))= -1$;  then  the correspondence  $ (1\times i)\Delta_X)$ has valence 1.  From Theorem 1 we get 
  $t_2((Y) =0$ hence a contradiction because $Y$ is  a K3 surface. Therefore  $v (1/2 (\Delta_X -(1\times i)\Delta_X))= 0$,  so that    $v(1 \times i)\Delta_X)= v(\Delta_X) = -1$.   By Theorem 1  $\theta : t_2(X) \to t_2(Y)$ is an isomorphism. Therefore  $(i) \Rightarrow (ii)$.\par 
\noindent Conversely if $ \theta : t_2(X) \iso  t_2(Y)$  then  by Corollary 1 (ii) $\Psi_X( \Delta_X -\Gamma_i) =0$, hence $\Delta_X -\Gamma_i \in Ker \  \Psi_X$. Since $q(X)=0$
$Ker \  \Psi_X$ is coincides with the ideal of degenerate correspondences. This proves (i).\par 
\noindent  The equivalences  $(ii) \Leftrightarrow(iii)$ and $(iii) \Leftrightarrow(iv)$ come  from Corollary 1.
\end{proof} 
\begin {rk} Let  $\sigma$ is an involution on   a K3 surface $X$ which is not symplectic, i.e $\sigma^*(\omega) =-\omega$, where $\omega$ is a generator of the vector space $H^{2,0}(X)$.  By [Zh 1.2 ]  if $X^{\sigma}=\emptyset$ the quotient surface $Y=X/<\sigma>$ is an Enriques surface,   while  $Y$ is a rational surface  if $X^{\sigma} \ne \emptyset$.  In any case the motive $h(Y)$ has no transcendental part. Therefore  $t_2(Y)=0$ and   $t_2(X) \ne t_2(Y)$, because $t_2(X) \ne 0$ for a K3 surface.
 From the identity in $ End_{\sM_{rat}}(t_2(X)$
$$[\xi] =1/2([\xi] +\bar \sigma([\xi]) +1/2([\xi] -\bar \sigma([\xi])$$
\noindent we get
 $$  \bar \sigma(([\xi]) = - [\xi] \  {\text and}  \   [\xi] \ne -[\xi] \  in   End_{\sM_{rat}}(t_2(X)$$
\noindent  because otherwise we would get      $t_2(X)= 0$. Hence Theorem 4 does not hold true. 
  \end{rk}
\medskip 
Following the example in Remark 3  we now  consider the case of a complex K3 surface $X$ with a non-symplectic group $G$ acting trivially on the algebraic cycles.  Any automorphism $g $ of $X$ preserves the 1-dimensional vector space  $H^{2,0} = H^2(X,\Omega^2_X)\simeq \C \omega$.  Hence $g$ is non-symplectic iff there exists a   complex number  $\alpha(g) \ne 1$ such that$g^*(\omega) = \alpha(g)\omega$. Let $NS(X)$ and $T_X$ be the lattices of algebraic and transcendental  cycles on $X$. $X$ is said to be unimodular if $det \ T_X =\pm1$. Let $H_X$ be the finite  cyclic group defined as the kernel of the map $Aut(X) \to O(NS(X))$, where $O(NS(X))$ denotes the  group of isometries of $NS(X)$. Then there are only finitely many values for  $m =|H_X|$. By  [LSY Th. 5]  one has the following result.
\begin {thm}  Let $X$ be a  complex K3 surface $X$ with a non-symplectic group $G$ acting trivially on the algebraic cycles. Let $m =|H_X| \ne 3$ : then there exists a surjective morphism $ F_n  \to X$, where $F_n \subset \P^3$ is the Fermat surface, of degree $n \ge 4$  
$$ F_n : X^n_0 +X^n_1 +X^n_2 +X^n_3 = 0 $$ 
\noindent Here  $n =m$  if $X$ is unimodular and $n = 2m$,  if $X$ is not unimodular.
\end{thm}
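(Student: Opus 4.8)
The statement is quoted from [LSY, Theorem 5], so the plan is to recall the argument underlying it: one combines the finiteness of the admissible orders for such an automorphism with an explicit cyclic‑cover model of $X$, and then invokes Shioda's theorem that every Delsarte surface is dominated by a Fermat surface. The point of landing on $F_n$ is that its motive lies in the tensor subcategory of $\sM_{rat}(\C)$ generated by abelian varieties (its transcendental part being built from Jacobi‑sum motives), so that a correspondence $F_n \to X$ will immediately yield, via the formalism of \S 2, that $t_2(X)$ is a direct summand of $t_2(F_n)$ — this is exactly what is needed for the corollary that follows.

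First I would analyse $H_X$. Since $H_X$ is finite cyclic, write $H_X=\langle\sigma\rangle$ with $\sigma$ of order $m$; by hypothesis $\sigma$ acts trivially on $NS(X)$ and, being non‑symplectic, acts on $H^{2,0}(X)=\C\omega$ through a primitive $m$‑th root of unity $\zeta_m$. By Nikulin's theory of non‑symplectic automorphisms of K3 surfaces, $\Q[\sigma]$ acts on $T_X\otimes\Q$ making it a $\Q(\zeta_m)$‑vector space, so that $\phi(m)\mid 22-\rho(X)$ and only finitely many $m$ occur; moreover, for each admissible $m$ the surface $X$ carries a distinguished model as a $\mu_m$‑cyclic cover — of $\P^2$, of a Hirzebruch surface $\mathbf F_e$, or of $\P^1\times\P^1$ — branched along a divisor preserved by $\sigma$. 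After diagonalising $\sigma$, the branch equation takes the shape of a Delsarte polynomial (four monomials), so $X$ is birational to the minimal resolution of a Delsarte surface $\bar X\subset\P^3$ (reducing from a weighted projective $3$‑space to $\P^3$ by a Veronese‑type reembedding if necessary) with a nondegenerate $4\times4$ exponent matrix $A$ of some degree $d$.

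Second, I would apply Shioda's theorem: the map $[x_0:\cdots:x_3]\mapsto[x^{A_0}:\cdots:x^{A_3}]$, where $A_j$ is the $j$‑th row of $A$, defines a dominant rational map $F_n\to\bar X$ with $n$ the least positive integer such that $nA^{-1}$ is integral. Composing with $\bar X\to X$ — and either resolving indeterminacy, or, what already suffices for the motivic application, replacing the map by the closure of its graph as a correspondence — exhibits $t_2(X)$ as a direct summand of $t_2(F_n)$; when the model can be taken to be a genuine finite quotient $F_n/G$ of the Fermat surface one obtains the surjective morphism literally. It then remains to identify $n$: tracking $\sigma$ through the cover shows the $\zeta_m$‑eigenspace of $H^2_{tr}(X)$ matches an eigencharacter summand of $H^2_{tr}(F_n)$, which forces $m\mid n$, and a discriminant computation on $T_X$ gives $n=m$ exactly when $T_X$ is unimodular and $n=2m$ otherwise, the extra factor $2$ reflecting the double‑cover structure recorded in the discriminant form of $T_X$; the case $m=3$ must be excluded because the recipe then produces $n<4$, for which $F_n$ is not a surface with the required properties. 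I expect the genuine obstacle to be the first step — producing the Delsarte model uniformly over all admissible $m$ and verifying that its exponent matrix yields precisely $n=m$ or $n=2m$ — after which the remainder is Shioda's algorithm applied verbatim together with the bookkeeping of \S 2.
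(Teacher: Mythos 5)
The first thing to say is that the paper does not prove this statement at all: the sentence immediately preceding it reads ``By [LSY Th.\ 5] one has the following result,'' so Theorem 5 is imported verbatim from Livn\'e--Sch\"utt--Yui and no internal argument exists to compare yours against. Your proposal is therefore judged as a standalone reconstruction, and as such it has a genuine gap exactly where you yourself flag it. The finiteness of the admissible orders $m$ (Nikulin/Vorontsov) and Shioda's algorithm for dominating a Delsarte surface by a Fermat surface are the easy bookends; the substance of the theorem is the middle step you assume: that for \emph{every} admissible $m\neq 3$ the surface $X$ (which is rigid in these cases and classified case by case by Vorontsov, Kond\=o, Oguiso--Zhang, Machida--Oguiso) admits an explicit four-monomial (weighted) Delsarte model, and that running Shioda's recipe on each of these explicit exponent matrices returns precisely $n=m$ in the unimodular cases and $n=2m$ in the non-unimodular ones. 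Saying ``a discriminant computation on $T_X$ gives $n=m$ or $n=2m$'' is not an argument; the degree comes out of the explicit equations, case by case, and producing those equations is the whole content of [LSY]. Deferring it means the theorem is not proved.

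Two further points are off. First, the statement asserts a surjective \emph{morphism} $F_n\to X$; your fallback of taking the closure of the graph of a dominant rational map as a correspondence proves a weaker statement (adequate for Corollary 2, but not for the theorem as stated --- one must either exhibit $X$ as an honest quotient of $F_n$ or resolve the indeterminacy and say so). Second, your guessed explanation for excluding $m=3$ (that the recipe would give $n<4$) does not hold up: in the non-unimodular situation $m=3$ would give $n=6$. The real obstruction is of a different nature --- for $m=3$ the K3 surfaces with this property are not rigid but move in positive-dimensional families, and a very general member cannot be dominated by any Fermat surface, so no uniform $F_n$ can exist. These are exactly the kind of details that show the reconstruction is an outline of where a proof should go rather than a proof.
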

\begin {cor} Let $X$ be a  complex K3 surface  with a non-symplectic group $G$ acting trivially on the algebraic cycles. Let $m =|H_X| \ne 3$. Then the motive of $X$ is finite dimensional and  lies in the subcategory  of $\sM_{rat}(\C)$ generated by the  motives of abelian varieties. K3 surfaces satisfying these conditions have 
$\rho(X ) = 2, 4, 6, 10, 12, 16, 18, 20.$ 
\end{cor}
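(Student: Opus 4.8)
The plan is to deduce the statement formally from Theorem 6 together with the classical structure of the motive of a Fermat surface. Since $m=|H_X|\neq 3$, Theorem 6 provides a surjective morphism $g\colon F_n\to X$, where $F_n\subset\P^3$ is the Fermat surface of degree $n\ge 4$. Both $F_n$ and $X$ are irreducible projective surfaces, so $g$ is generically finite of some degree $d\ge 1$. The graph $\Gamma_g$ and its transpose $\Gamma_g^{t}$ define correspondences $g_*\colon h(F_n)\to h(X)$ and $g^{*}\colon h(X)\to h(F_n)$ in $\sM_{rat}(\C)$, and the projection formula gives $g_*\circ g^{*}=d\cdot\mathrm{id}_{h(X)}$. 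Hence $\tfrac{1}{d}\,g^{*}$ is a section of $g_*$, and $h(X)$ is a direct summand of $h(F_n)$.

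It is therefore enough to show that $h(F_n)$ is finite dimensional and lies in the tensor subcategory $\sA\subset\sM_{rat}(\C)$ generated by the motives of abelian varieties; both properties are inherited by direct summands, finite dimensionality by Kimura and membership in $\sA$ because $\sA$ is pseudo-abelian, hence closed under images of idempotents. For $F_n$ this is classical: by Shioda's inductive structure for Fermat varieties, $h(F_n)$ lies in the pseudo-abelian tensor subcategory generated by Tate motives and the motive $h(C_n)$ of the Fermat curve $C_n\colon X_0^{n}+X_1^{n}+X_2^{n}=0$; and $h(C_n)$, being the motive of a smooth projective curve, is finite dimensional and lies in the subcategory generated by its Jacobian, hence in $\sA$. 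Since $\sA$ is closed under tensor products, direct sums, Tate twists and summands, $h(F_n)\in\sA$ is finite dimensional, and the same then holds for $h(X)$. (When $n\ge 5$ the surface $F_n$ is of general type, so this already exhibits finite-dimensional surface motives beyond the usual list, but still inside $\sA$.) This proves the first two assertions.

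The list of possible Picard numbers is read off from the classification of the K3 surfaces in question — equivalently of their transcendental lattices $T_X=NS(X)^{\perp}\subset H^2(X,\Z)$ — that underlies Theorem 6. The group $H_X$ acts faithfully on $T_X$, since by the Torelli theorem an automorphism acting trivially on both $NS(X)$ and $T_X$ is the identity; consequently $H_X$ is cyclic of order $m$, the space $T_X\otimes\Q$ is a $\Q(\zeta_m)$-vector space, and $\phi(m)$ divides $\operatorname{rk}T_X=22-\rho(X)$. Running over the finitely many admissible orders $m\neq 3$ recorded in the underlying classification leaves exactly $\rho(X)\in\{2,4,6,10,12,16,18,20\}$. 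I do not expect a genuine obstacle: the substantive inputs are Theorem 6 and the classical fact that Fermat surfaces have motive in $\sA$, and the only points requiring care are the projection-formula splitting, the closure properties of $\sA$, and the bookkeeping that turns the divisibility $\phi(m)\mid\operatorname{rk}T_X$, together with the known admissible orders, into the stated list of values of $\rho(X)$.
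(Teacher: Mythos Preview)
Your proof is correct and follows essentially the same route as the paper: invoke the LSY theorem (which you label Theorem~6; in the paper's numbering it is Theorem~5) to obtain a surjection $F_n\to X$, use Shioda--Katsura to place $h(F_n)$ in the abelian-variety subcategory, split off $h(X)$ as a direct summand via the projection formula (the paper cites [Ki~6.6,~6.8] for this step), and read off the Picard-number list from the LSY classification. Your version simply unpacks these citations with a bit more detail, including the $\phi(m)\mid\operatorname{rk}T_X$ reasoning behind the list of $\rho(X)$.
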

\begin {proof} From Theorem 5  there is surjective morphism $F_n \to X$ ,with $F_n $ a Fermat surface. By  [SK] the motive  $h(F_n)$  is finite dimensional and lies in the subcategory  of $\sM_{rat}(\C)$ generated by the motives of abelian varieties. By [Ki 6.6 and 6.8]  if $f :Z \to X$ is a surjective  morphism of smooth  projective varieties, then $h(X)$ is a direct summand of $h(Z)$.Therefore  $h(X)$ is finite dimensional and lies in the subcategory  of $\sM_{rat}(\C)$ generated by the  motives of abelian varieties.  The computation of the rank $\rho(X)$  appears in [LSY Th. 1 and  Th. 2].
\end{proof} 
\medskip
  
 \section { Examples }
In this section we   describe some   examples of K3 surfaces with a Nikulin involution  $i$ ,  such that  $t_2(X) \simeq t_2(Y)$. Hence $h(X) \simeq h(Y)$.   We will use  the classification given by  Van Geemen and Sarti in [VG-S] and by Garbagnati and Sarti in [G-S]. Their results are based  on the following Theorem. 
 \begin{thm}([VG-S 2.2]) Let $X$ be K3 surface with $\rho(X) =9$ and  a Nikulin involution $i$. Let  $L$ be a generator of $E_8(-2)^{\perp}  \subset  NS(X)$ with $L^2=2d>0$ which we may assume to be ample.
Let 
 $$\Lambda_{2d} =\Z L \oplus E_8(-2)$$
 Then,  if $L^2\equiv 2 $ mod 4,  we have $\Lambda_{2d}=NS(X)$. If $L^2 \equiv 0 $ mod 4 we have either $NS(X) \simeq \Lambda_{2d}$ or $NS(X) \simeq \Lambda_{\overline {2d}}$.  Here
$  \Lambda_{\overline{ 2d}}$ is the unique even lattice containing $\Lambda_{2d}$ with  $\Lambda_{\overline{2d}}/\Lambda_{2d} \simeq \Z/ 2 \Z$ and such that $E_8(-2)$ is a primitive sublattice of 
$  \Lambda_{\overline{ 2d}}$. For every $\Gamma =\Lambda_{2d}$, with $d >0$ or $\Gamma =  \Lambda_{\overline{ 2d}}$ with $d =2m >0$,  there exists a K3 surface with a Nikulin involution $i$ such that
$NS(X) =\Gamma$ and $(H^2X,\Z)^i)^{\perp} \simeq E_8(-2)$.   
   \end{thm}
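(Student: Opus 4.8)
\emph{The plan} is to split the statement into a classification part (which lattices can occur as $NS(X)$) and an existence part, both handled by Nikulin's theory of discriminant forms, the existence part using in addition the surjectivity of the period map for K3 surfaces.

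\emph{Classification.} Since $\rho(X)=9$, the primitive sublattice $E_8(-2)\subset NS(X)$ has corank $1$; as $NS(X)$ has signature $(1,8)$ and $E_8(-2)$ is negative definite, the orthogonal complement $\Z L:=E_8(-2)^{\perp}\subset NS(X)$ is a positive definite (hence primitive) rank one lattice, and $NS(X)$ being even we may write $L^{2}=2d>0$ with $L$ ample. Thus $\Lambda_{2d}=\Z L\oplus E_8(-2)$ sits inside $NS(X)$ with finite index, so $NS(X)$ is an even overlattice of $\Lambda_{2d}$. By Nikulin's correspondence, even overlattices $M\supseteq\Lambda_{2d}$ are in bijection with isotropic subgroups $H$ of the discriminant group $A_{\Lambda_{2d}}=\Z/2d\oplus(\Z/2)^{8}$ for $q_{\Z L}\oplus q_{E_8(-2)}$. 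Since $E_8(-2)$ and $\Z L$ both stay primitive in $M=NS(X)$, the group $H$ injects into $\Z/2d$ (so $H$ is cyclic) and into $(\Z/2)^{8}$ (so $|H|\le 2$). If $H=0$ then $NS(X)=\Lambda_{2d}$. If $|H|=2$, then $H$ is generated by $(\tfrac12 L,\bar w)$ with $\bar w\in(\Z/2)^{8}\setminus\{0\}$ (the order two element of $\Z/2d$ being unique), and isotropy reads $q_{\Z L}(\tfrac12 L)+q_{E_8(-2)}(\bar w)\equiv 0\pmod{2\Z}$. Here $q_{\Z L}(\tfrac12 L)=\tfrac14 L^{2}=\tfrac d2\bmod 2\Z$, while $q_{E_8(-2)}$ takes values in $\{0,1\}\subset\Q/2\Z$. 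Hence if $L^{2}\equiv 2\pmod 4$ (i.e. $d$ odd) no admissible $\bar w$ exists and $NS(X)=\Lambda_{2d}$; if $L^{2}\equiv 0\pmod 4$ one needs $q_{E_8(-2)}(\bar w)$ equal to the prescribed value $\tfrac d2\bmod 2$. Such $\bar w$ exist, and since the Weyl group $O(E_8)=W(E_8)$ acts transitively on the vectors of $E_8$ of minimal norm $2$ and on those of minimal norm $4$, it acts transitively on the non-zero classes $\bar w\in E_8/2E_8$ with a prescribed value of $q_{E_8(-2)}$; thus the resulting index two even overlattice is unique up to isometry, it is $\Lambda_{\overline{2d}}$, and one reads off from the construction that $E_8(-2)$ is primitive in it and $\Lambda_{\overline{2d}}/\Lambda_{2d}\cong\Z/2\Z$. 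This gives the stated dichotomy.

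\emph{Existence.} Given $\Gamma=\Lambda_{2d}$ ($d>0$) or $\Gamma=\Lambda_{\overline{2d}}$ ($d=2m>0$), recall, as in the discussion preceding Theorem 2, that the K3 lattice $\Lambda_{K3}=U^{3}\oplus E_8(-1)^{2}$ carries the involution $\iota$ exchanging the two copies of $E_8(-1)$, with invariant lattice $U^{3}\oplus E_8(-2)$ and anti-invariant lattice $E_8(-2)$. One embeds $\Gamma$ primitively into $\Lambda_{K3}$, compatibly with $\iota$, so that the $\iota$-anti-invariant part of $\Gamma$ is this copy of $E_8(-2)$; the hypotheses of Nikulin's theorem on the existence (and, where needed, uniqueness) of primitive embeddings are satisfied because $\Gamma$ has signature $(1,8)$, its orthogonal complement has signature $(2,11)$, and the relevant discriminant forms lie in the admissible range. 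Choosing a period $\omega$ in the associated period domain, generic in $\Gamma^{\perp}_{\Lambda_{K3}}$, and invoking the surjectivity of the period map yields a K3 surface $X$ with $H^{2}(X,\Z)\cong\Lambda_{K3}$, $NS(X)\cong\Gamma$ and $(H^{2}(X,\Z)^{i})^{\perp}\cong E_8(-2)$; since $\iota$ fixes $\omega$ and an ample class, the strong Torelli theorem promotes $\iota$ to an automorphism $i$ of $X$, and $\iota$ acting as $+1$ on the transcendental lattice makes $i$ symplectic, i.e. a Nikulin involution.

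\emph{Main obstacle.} The delicate step on the classification side is the uniqueness assertion when $L^{2}\equiv 0\pmod 4$ — that every admissible isotropic vector $\bar w$ produces isometric overlattices $\Lambda_{\overline{2d}}$ — which rests on the transitivity of the $O(E_8)$-action on the level sets of the discriminant form of $E_8(-2)$; on the existence side, the delicate step is checking Nikulin's criteria for a primitive embedding $\Gamma\hookrightarrow\Lambda_{K3}$ compatible with the swap involution, together with the Torelli-theoretic passage from the lattice involution $\iota$ to a genuine automorphism of $X$.
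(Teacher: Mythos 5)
The paper offers no proof of this statement: it is quoted as Theorem 6 directly from van Geemen--Sarti, with ``[VG-S 2.2]'' as the only justification, so there is no internal argument to compare against. Your sketch is a correct reconstruction, in outline, of the proof in that cited source: the classification of $NS(X)$ via Nikulin's bijection between even overlattices of $\Lambda_{2d}=\Z L\oplus E_8(-2)$ and isotropic subgroups of its discriminant form (primitivity of $\Z L$ and of $E_8(-2)$ forcing $|H|\le 2$, the parity of $d$ deciding whether a glue vector $(\tfrac12 L,\bar w)$ can be isotropic, and transitivity of $O(E_8)=W(E_8)$ on the nonzero classes of $E_8/2E_8$ with fixed value of the discriminant form giving uniqueness of $\Lambda_{\overline{2d}}$), and existence via a primitive embedding of $\Gamma$ into $U^3\oplus E_8(-1)^2$ compatible with the swap involution, a generic period, surjectivity of the period map, and the strong Torelli theorem. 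The points you leave compressed --- constructing the embedding compatibly with $\iota$, and checking that no $(-2)$-class of $\Gamma$ is orthogonal to $L$ so that the $\iota$-invariant class can be taken ample before Torelli is invoked to promote $\iota$ to a symplectic involution --- are precisely where the cited proof does its detailed work, but they are standard and do not constitute a gap in your approach.
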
 
Let's consider the following cases described in[VG-S]:  

 (i) $X$ is  a double cover of $\P^2$ branched over a sextic curve and $Y$ a a double cover of a quadric cone in $\P^3$;\par

(ii) $X$ is a double cover of a quadric in $\P^3$ and $ Y$is the double cover of $\P^2$ branched over a reducible sextic;\par

(iii) the image of $X$ under the map $\Phi_L$ is the intersection of 3 quadrics in $\P^5$ and $ Y$ is a quartic surface in $\P^3$.\par  

 First we l show  that in  the cases   (i),(ii)and (iii)  the map  $f:  X \to Y$ induces an isomorphism  

 $$t_2(X) \simeq  t_2( Y)$$

Then, in Theorem 7  we prove  that the same result holds if   $ g : X \to \P^1$ is a general elliptic fibration with a section and also $Y$ is an elliptic fibration.\par  
 In the case (i)  $NS(X) \simeq \Z L \oplus E_8(-2)$, with $L^2 =2$ and $i^*L \simeq L$ (see[VG-S 3.2]). The map $\Phi_L:  X \to  \P^2$  is a  double cover  branched over a sextic curve $C$ and   $ X/<i>$ is a double cover of a quadric cone in $\P^3$.  Let $\sigma$ denote the covering involution on $X$. Then $\sigma \ne i $. The quotient  surface $ Y= X/<\sigma>$ is isomorphic to $\P^2$.  Let $j= \sigma \circ i = i \circ \sigma$ and let  $G = <1,\sigma,i, j> \simeq \Z/2\Z \times \Z/2\Z$. The quotient  surfaces $\P^2 =X/<\sigma>$ and $S=X /<j>$  are both rational,  because $S$ is a Del Pezzo surface of  degree 1(see [VG-S 3.2]).  The motives $h(\P^2)$ and $h(S)$ have no transcendental part.  Therefore   $t_2(X) \ne t_2(Y)$  and $t_2(X) \ne t_2(S)$, because $t_2(X) \ne 0$ for a K3 surface.  From the identities  in $End_{\sM_{rat}}(t_2(X))$ 
$$[\xi] =1/2([\xi] +\bar \sigma([\xi]) +1/2([\xi] -\bar \sigma([\xi])$$
$$[\xi] =1/2([\xi] +\bar j[\xi]) +1/2([\xi] -\bar  j([\xi])$$
\noindent we get , by Corollary 1 (iv),   $[\xi] +\bar \sigma([\xi] )= [\xi] +\bar j([\xi])=0 $ in $ End_{\sM_{rat}}(t_2(X))$.\par
\noindent We have

$$\Psi(\Gamma_i) = \Psi_X( (1\times i)\Delta_X) = \Psi_X((1\times \sigma \circ j)\Delta_X)= \Psi(\Gamma_{\sigma\circ j})$$

\noindent Therefore the class of $\bar i ([\xi]) $ in  $End_{\sM_{rat}}(t_2(X))$ equals $\bar\sigma([\xi])\circ \bar j([\xi]) =  (- [\xi] )\circ  (-[\xi] ) = ([\xi])^2 =[\xi]$ because $[\xi]$ is the identity of $End_{\sM_{rat}}(t_2(X))$. Hence

$$ \bar i( [\xi])  - [\xi ]=0 \  in \   End_{\sM_{rat}}(t_2(X))$$

\noindent From Corollary 1 (ii)   we get $\theta : t_2(X) \iso t_2( Y)$.\par \medskip
 
 \noindent The proof for  (ii) is  similar to the previous one. In this case  the lattice $\Z L \oplus E_8(-2)$ has index 2 in $NS(X$ and we may assume that $NS(X)$ is generated by $L$, $E_8(-2)$ and $E_1 = (L+v)/2$, with $v \in E_8(-2)$, such that $v^2 =-4$. Then $E_1$ and $E_2$, where $E_2 =(L-v)/2$, are the classes of 2 elliptic fibrations. The map 
$$\Phi_L : X \to \P^3$$
\noindent is a 2:1 map to a quadric $Q$ in $\P^3$ and it is ramified on a curve $C$ of bidegree $(4,4)$ ([VG-S 3.5]). The quadric $Q$ is smooth, hence it is isomorphic to $\P^1 \times \P^1$. The covering involution $\sigma :X \to X $  of $X \to Q$ and the Nikulin involution $i$ commute, the elliptic pencils $E_1$ and $E_2$ are permuted by $i$ because $i^* L=L$ and $i^*v =-v$. $i$ induces an involution  $i_{Q}$ on $Q\simeq \P^1 \times \P^1$  which acts sending a point  $\{(s,t),(u,v)\} $ to $ \{ (u,v),(s,t)\} $ . The quotient $Q/<i_Q>$ is isomorphic to $\P^2$.  Let $j =i \circ \sigma =\sigma \circ i$  in $Aut \ (X)$ and let   $G = \{1,\sigma,i, j\} \simeq \Z/2\Z \times \Z/2\Z$. $S = X/<j>$ is a Del Pezzo surface of degree 2, by [VG-S 3.5]. The motives $h(Q)$ and $h(S)$ have no transcendental part, hence from the same argument  as in (i), we get an isomorphism  $t_2(X) \simeq t_2( Y)$.\par \medskip
\noindent We  now consider the description given in [VG-S 3.7] of  (iii). Let $Y$ be  the desingularization of the quotient  surface $X/<i>$ . In this case  there is a line bundle  $M \in NS( Y)$ such that $\beta^*L \simeq f^*M$  and 
$$H^0(X,L) \simeq f^*(H^0( Y,M)) \oplus f^*(H^0( Y, M-   C))$$
where  $\beta : \tilde X \to X$ is the blow-up at the 8 fixed points $ P_1,P_2,...P_8 $ of  $i$,  $f:  \tilde X \to Y$ and  $ C= (\sum_{ 1 \le i \le 8}   C_i) /2 \in NS( Y)$, with $ C_i$  the rational curves  on  $Y $ corresponding to the 8 singular points $Q_1,\cdots,Q_8$ of $ X/<i>$. The above decomposition is the decomposition of $H^0(X,L)$ into the $i^*$ eigenspaces. We have $L^2 =8$, $M^2 =4$, $h^0(M)=4$, $ h^0(M -C)=2$ so that
 $$ \Phi_L : X \to \P^5  \  ;  \  \Phi_M :   Y \to \P^3  \ ; \  \Phi_{M- C} :  Y \to \P^1           $$
\noindent The image of $X$ under $\Phi_L$ is the intersection of 3 quadrics  in $\P^5$ and the involution $i$ is induced by the involution
$$  \C^6 : (x_0,x_1,x_2,x_3,y_o,y_1) \to (x_0,x_1,x_2,x_3,-y_o,-y_1) $$
\noindent The fixed points   $ (P_1,P_2,...P_8)$ lie in  $X \cap \{ y_0 =y_1=0 \}$. The quadrics in the ideal of $X$ are of the form
$$y^2_0 =Q_1(x),   y_0y_1 =Q_2(x)  , y^2_1=Q_3(x)$$
\noindent where $x =(x_0,x_1,x_2,x_3)$. The line  
$$l : x_0=x_1=x_2=x_3=0$$
\noindent in $\P^5$ is fixed under $i$ and $l \cap X = \emptyset$.The image of $  Y$ by $\Phi_M$ is the projection of $X$ from the invariant line to the invariant $\P^3$ which is defined by $y_0=y_1=0$. The image is the quartic surface in $\P^3$ defined by
$$ Q_1(x)Q_3(x) -Q^2_2(x)=0$$
which can be identified with $Y$. \par 
\noindent We now use a result  in  [Vois 1.18] : if $X$ is the K3 surface obtained as the  intersection of 3 quadrics in $\P^5$ which are invariant under the involution 
$$i : (x_0,x_1,x_2,x_3,y_o,y_1) \to (x_0,x_1,x_2,x_3,-y_o,-y_1)$$
then $i^*(\omega ) = \omega$ for $\omega \in H^{2,0}(X)$ and $i$ acts trivially on $A_0(X)$. Therefore, by Corollary 1 (iii)     we get  an isomorphism $ \theta : t_2(X) \iso t_2( Y) $. 

\medskip 

Next we consider  the case  of a  K3 surface $X$  which has an elliptic fibration $g : X \to \P^1$  with a global section  $\sigma : \P^1 \to X$. The set of sections of $g$ is the Mordell-Weil group $MW_g$ with identity element $\sigma$. $MW_g$ is the subgroup  of Aut $X$ consisting of all automorphism acting  on a general fiber as translations and these translations preserve the holomorphic two form on $X$. Therefore, if there is an element  $\tau$ of order 2 in $MW_g$ then the translation by $\tau$ defines a Nikulin involution  $i$ on $X$.

\begin {thm} Let   $X$  a general  elliptic fibration $g : X \to \P^1$  with   sections   $\sigma$, $ \tau$ as above. Let  $i$ be the corresponding Nikulin involution on $X$ and let $Y$ be the desingularization of $X/<i>$.
Then the map $f : X \to X/<i>$ induces an isomorphism
$$\theta : t_2(X) \iso t_2(Y)$$
\end{thm}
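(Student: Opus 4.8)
The plan is to reduce Theorem 7 to a statement about zero-cycles and then to read it off the geometry of the fibration. By Theorem 4, the induced map $\theta$ is an isomorphism if and only if the Nikulin involution $i$ acts as the identity on $A_0(X)_0$; since $X$ is a K3 surface, $A_0(X)_0=T(X)$ is the Albanese kernel, and this is the only assertion that actually needs a proof. (Here $i$ is genuinely a Nikulin involution: a translation by a section preserves the holomorphic two-form, so $i$ is symplectic, and Theorem 4 applies.)

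First I would describe the natural generators of $A_0(X)_0$ attached to the two sections. Since $\sigma(\P^1)\cong\P^1$ is a rational curve on $X$, all the points $\sigma(t)$, $t\in\P^1$, represent a single class $e_0\in A_0(X)$; likewise the $2$-torsion section $\tau$ gives a single class $e_1=[\tau(t)]$. Every zero-cycle of degree $0$ is a $\Z$-combination of the classes $[P]-e_0$, $P\in X$, so it suffices to check that $i_*$ fixes each such class. If $P$ lies on a singular fibre $E_{t_0}=g^{-1}(t_0)$, then $E_{t_0}$ is a connected union of rational curves meeting the section, so every point of $E_{t_0}$ has class $e_0$ in $A_0(X)$, and $[P]-e_0=0$; there is nothing to check. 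The remaining case is $P$ on a smooth fibre $E_t$: by the definition of the Mordell--Weil action, $i$ restricts on $E_t$ to translation by $\tau(t)$ for the group law with origin $\sigma(t)$, and since the Abel--Jacobi map $Q\mapsto[Q]-[\sigma(t)]$ is a group isomorphism $A_0(E_t)_0\iso E_t$, one gets $[i(P)]-[P]=[\tau(t)]-[\sigma(t)]=e_1-e_0$ in $A_0(E_t)$, hence in $A_0(X)$; also $i_*e_0=[i(\sigma(t))]=[\tau(t)]=e_1$. Therefore $i_*([P]-e_0)=[i(P)]-e_1=[P]-e_0$. Thus $i_*=\mathrm{id}$ on $A_0(X)_0$, and Theorem 4 yields $\theta:t_2(X)\iso t_2(Y)$.

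The substance of the argument is concentrated in the smooth-fibre computation: identifying $i$ with fibrewise translation by the $2$-torsion section and then invoking the homomorphism property of Abel--Jacobi on each elliptic fibre. By contrast, the bookkeeping over the finitely many singular fibres costs nothing, precisely because every component of a fibre of an elliptic surface is a rational curve, so these fibres contribute no generator to $A_0(X)_0$; this is also why no moving lemma is needed. An equivalent way to organise the same computation is to note that on any elliptic surface with a section the translation by a section acts trivially on $A_0(-)_0$, and that $i$ is such a translation. In particular the hypothesis that the fibration be \emph{general} is used only to guarantee the existence of the $2$-torsion section and the good behaviour of the quotient $Y$; it plays no role in the cycle-theoretic heart of the proof.
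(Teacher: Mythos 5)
Your argument is correct, and it reaches the conclusion by a genuinely different route than the paper. You verify criterion (iv) of Theorem 4 (equivalently Corollary 1 (ii)--(iii)) directly on zero-cycles: on a smooth fibre the Abel--Jacobi isomorphism for $E_t$ with origin $\sigma(t)$ gives $[i(P)]-[P]=[\tau(t)]-[\sigma(t)]$, the two sections and all components of the singular fibres are rational curves so the relevant classes $e_0,e_1$ are constant and the singular fibres contribute no generators, and the shift cancels against $i_*e_0=e_1$, whence $i_*=\mathrm{id}$ on $A_0(X)_0$. The paper instead argues at the level of correspondences: it restricts $\Gamma_i-\Delta_X$ to $V\times V$, where $V$ is the union of the smooth fibres, claims this restriction vanishes because the translation is by a point of order $2$, and concludes $\Gamma_i-\Delta_X\in\sJ(X)=\Ker\,\Psi_X$, so that $\bar i([\xi])=[\xi]$ and Corollary 1 (ii) applies. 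Both proofs rest on the same criterion, but your fibrewise computation is the more careful one: the paper's intermediate identity $(1\times\tau)\Delta_V=\Delta_V$ cannot hold literally as an identity of cycles (translation by a nonzero $2$-torsion section is fixed-point free on the smooth fibres), and is best understood as exactly the statement you prove, namely that $[\tau(x)]-[x]$ is a class supported on the sections and hence dies in $End_{\sM_{rat}}(t_2(X))$; your version also makes transparent that only the existence of the $2$-torsion section, and not the precise $8I_1+8I_2$ fibre configuration of the general member, enters the cycle-theoretic part of the argument.
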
 
\begin{proof}In [VG-S 4.2] it  is shown that for  a general elliptic fibration $g: X \to \P^1$ there is an isomorphism  $MW_g  =\{\sigma,\tau \} \simeq \Z/2\Z$where $\sigma : \P^1\to X$ . Hence  the translation by $\tau$ defines a Nikulin involution  $i$ on $X$. The Weierstrass equation of $X$ can be put in the form 
$$ X :  y^2=x(x^2+a(t)x +b(t))$$
where the degree of $a(t)$ and $b(t)$ are 4 and 8 respectively. There are 8 singular fibers of type $I_1$, which are rational curves with a node, corresponding to the zeroes  $\{a_1,\cdots,a_8\}$ of $a^2(t) - 4b(t)$ and 8 singular fibers of type $I_2$, which are union of two $\P^1$  meeting in 2 points, corresponding to the zeroes  $  \{b_1,\cdots,b_8\}$ of $b(t)$.  The fixed points of the translation by $\tau$ are the 8 nodes in the $I_1$-fibers. $\tau$ acts on the generic fiber $E_t$ as the 
translation by a point $P$ of order 2, i.e. $\tau(Q) =Q +P$ with $2P=0$. The desingularization $ Y$ of the quotient surface $ X/<\tau>$  is 
an elliptic fibration with Weierstrass equatiion
$$  Y : y^2 = x(x^2- 2a(t)x+9a(t)^2-4b(t).$$ 
\noindent The generic fiber $ F_t$ of  $ Y$ is the elliptic curve 
$E_t/<P>$, where $E_t$ is the generic fiber on $X$.  Let
$$V =\bigcup_{t \in A} g^{-1}(t)= \bigcup E_t $$
\noindent  where $A = \P^1 - \{a_1,\cdots ,a_8, b_1,\cdots ,b_8 \} $. 
Then $V$ is open in $X$ and,   for every point $x \in V$,  the involution $\tau$ acts as  translation by a point of order 2 on $E_t$,  so that $2 \tau(x) = 2x$. Therefore $2 (1 \times \tau)(x,x) =(2x,2x)$, for all $x \in V$  i.e. $(1 \times \tau)\Delta_V =\Delta_V$  with  $ \Delta_V =\Delta_X \cap (V \times X)$. We get  $ (1\times \tau)\Delta_X -\Delta_X =0$ on $V \times V$, hence
$ (1\times \tau)\Delta_X -\Delta_X \in \sJ(X)$, with $\sJ(X) =Ker \ \Psi_X $ and  $\Psi_X :  A^2(X \times X) \to End_{\sM_{rat}}(t_2(X))$. 
Therefore  $\Psi_X(1 \times  \tau)\Delta_X )=   \Psi_X( \Delta_X )$ and 
 $$\bar \tau([\xi] )= [\xi]$$
\noindent in  $ End_{\sM_{rat}}(t_2(X)$, where $\xi$ is the generic point of $X$. From Corollary 1 (ii) we get
$$\theta :  t_2(X) \iso t_2( Y)$$  
\end{proof}
\begin {rk} By [VG-S 4.1],   if $X$ is as in theorem 7 ,  then the Neron-Severi group $NS(X)$ has rank $\rho(X)=10$, and  $dim \ T_{X,\Q}=12$ is even. In this case the isomorphism of Hodge structures $\phi_i: T_{X,\Q} \simeq T_{Y,\Q}$,  induced by the involution $i$,  is an isometry. On the contrary, in the cases described in (i),(ii), (iii), where $\rho(X)=9$,   $\phi_i$ is not an isometry. This follows from [VG-S 2,5] because $dim \  T_{X,\Q}$ is odd.  \end{rk}

 \end{document}